\newtheorem{theorem}{Theorem}
\newtheorem{lemma}{Lemma}
\newtheorem{definition}{Definition}
\newenvironment{packed_enum}{
\begin{enumerate}
\topsep=0pt plus 2pt minus 4pt
  \setlength{\itemsep}{1pt}
  \setlength{\parskip}{0pt}
  \setlength{\parsep}{0pt}
}{\end{enumerate}}
\newenvironment{packed_item}{
\begin{itemize}
\topsep=0pt plus 2pt minus 4pt
  \setlength{\itemsep}{1pt}
  \setlength{\parskip}{0pt}
  \setlength{\parsep}{0pt}
}{\end{itemize}}    
\begin{document}

\begin{center}
  \textbf{\LARGE Aggregation based on graph matching and inexact coarse grid solve for algebraic multigrid}\vspace*{3ex}\\
  Pawan Kumar\footnote{This work was funded by Fonds de la recherche scientifique (FNRS)(Ref: 2011/V 6/5/004-IB/CS-15) at Universit\'e Libre de Brussels, Belgique.}\\
  Service de M\'etrologie Nucl\'eaire \\
  Universit\'e libre de Bruxelles \\
  Bruxelles, Belgium \\
  \textsf{kumar.lri@gmail.com}\vspace*{0.5ex}\\
\end{center}
\section*{Abstract}
A graph based matching is used to construct aggregation based coarsening for
algebraic multigrid method. Effects of inexact coarse grid solve is analyzed
numerically for a highly discontinuous convection-diffusion
coefficient matrix, and for problems from Florida matrix market collection.
The proposed strategy is found to be more robust compared to a
classical AMG approach.
\section{Introduction}
We concern ourselves with the problem of solving large sparse linear
system of the form
\begin{eqnarray} \label{system} Ax = b,
\end{eqnarray}
arising from the cell centered finite volume discretization of the
convection diffusion equation as follows 
  \begin{eqnarray}\label{pde}
    \text{div}(\mathbf{a}(x)u)-\text{div}(\kappa(x)\nabla u)&=&f~\text{in}~\Omega, \notag \\
							    u&=&0~\text{on}~\partial\Omega_D, \\
				  \frac{\partial u}{\partial n}&=&0~\text{on}~\partial\Omega_N, \notag
  \end{eqnarray}
where $\Omega=[0, 1]^n$ ($n=2$, or 3),
$\partial\Omega_N=\partial\Omega\setminus\partial\Omega_D.$ The vector
field $\mathbf{a}$ and the tensor $\kappa$ are the given coefficients
of the partial differential operator. In 2D case, we have
$\partial\Omega_D=[0,1]\times \{0,1\}$, and in 3D case, we have
$\partial\Omega_D=[0,1]\times \{0,1\}\times [0,1]$. Other sources
include problems from Florida matrix market collection \cite{tim}; 
see Table~(\ref{Tab:FMM}) for a list of problems considered in this paper.

Currently, one of the most successful methods for these problems are
the multigrid methods (MG) \cite{rug, stu, tro}. The robustness of the multigrid method 
is significantly improved when they are used as a preconditioner in the Krylov subspace method \cite{saad96}. 
If $B$ denotes the MG preconditioner then the preconditioned linear system is the transformation 
of the linear system \eqref{system} to $B^{-1}Ax=B^{-1}b$. Here $B$ is a preconditioner which 
an approximation to the matrix $A$ such that the spectrum of $B^{-1}A$ is ``favourable'' for 
the faster convergence of Krylov methods. For instance, if the eigenvalues are clustered and are sufficiently close 
to one, then a fast convergence is observed in practice. Furthermore, the preconditioner B 
should be cheap to build and apply. With the advent of modern day multiprocessor and multicore 
era, the proposed method should have sufficient parallelism as well.

In multigrid like methods,
the problem is solved using a hierarchy of discretizations; the finest
grid is at the top of the hierarchy followed by coarser grids. The two complementary processes are: smoothening and coarse grid correction. The 
smoothers are usually chosen to be the classical relaxation methods such as
Jacobi, Gauss-Seidel, or incomplete LU methods \cite{saad96}. Analysis for model 
problems reveals that the smoothers efficiently eliminates the low frequency part of the error,
while the global correction which is obtained by solving a restricted
problem on the coarser grid damps the high frequency part of the error \cite{wie}. 
In fact, low frequency errors on the fine grid becomes high frequency error on the 
coarse grid leading to their efficient resolution on the coarser grid. It is therefore
 crucial to choose efficient smoothers and a coarse grid
solver. The classical geometric multigrid methods require informations
on the grid geometry and constructs a restriction operator and a
coarse grid. Since, a geometric multigrid method is closely related to the grid, the problem with 
nonlinearity can be resolved efficiently. 
But, for a complex grid, the applicability of the method becomes increasingly difficult.
On contrary, algebraic multigrid method defines the
necessary ingredients based solely on the coefficient matrix.
Much research have been devoted to algebraic multigrid methods and
several variants exists. 

In this paper, an aggregation based algebraic multigrid
is proposed. The aggregation is based on graph matching. This is
achieved by partitioning the graph of the matrix such that the
partitioned subgraphs are assumed to be the aggregates. Once a set of
aggregates is defined, the coarse grid is constructed from the Galerkin
formula. In \cite{kim}, authors use graph partitioner 
to form aggregates, and forward Gauss-Seidel with downwind numbering is used as 
pre- and post-smoother with the usual recursive multigrid method, where 
the coarsening is continued untill the number of unknowns in the coarse grid are less than 10. 
This approach may lead to a deep hierarchy of grids, thus making the method 
very recursive and less adapted to modern day multi-processor or 
multi-core environment. In \cite{ras}, similar graph based matching is used to form 
a coarse grid, and the classical recursive smoothed AMG approach is followed, however, 
here, ILUT \cite{saad96} is used for pre- and post-smoothing.

Our aim in this work is to propose a strategy that tries to avoid deep 
recursion but combines several different approaches as above. The strategy 
we adopt has the following ingredients:
\vspace{-5mm}
\begin{packed_item}
\item Coarsening based on graph matching
\item ILU(0) is the smoother with natural or nested dissection
  reordering 
\item Coarse grid equation is solved inexactly using ILUT
\end{packed_item}
\vspace{-5mm}
We show that the strategy proposed above is simple, easy to implement, and works well in practice for symmetric 
positive definite systems with large jumps in the coefficients. Solving 
a coarse grid inexactly leads to a faster and cheaper method. Indeed, 
a parallel incomplete coarse grid solve will be desirable, however,
in this work, we consider only the sequential version. We provide an estimate of heurestic 
coarse grid size and an estimate of a parameter involved in the inexact coarse grid solve. We compare 
our approach with a classical AMG \cite{not} with Gauss-Seidel smoothing and exact 
coarse grid solve. 

The rest of the paper is organized as follows. In section (2), we
discuss the classical coarsening strategy based on strength of connection, and the one based on graph matching. The numerical 
experiments are presented in section (3); the proposed method is compared with 
a classical AMG method on discontinous convection-diffusion problems and some problems 
from Florida matrix market collection \cite{tim}. Finally, section (4) concludes the paper. 

\section{Graph matching based aggregation for AMG}
In a typical two grid method, there are two complementary processes namely, a
smoother and a coarse grid correction step. When this strategy is repeated
by creating another coarser grid, then the method is known as
multigrid. During a fixed point iterative process, the high frequency 
components of the error or the so called
rough part are dealt with efficiently by a smoother. On the other hand, the low frequency
components of the error can only with dealt with globally by a method
that is ``connected'' globally (pertains to global fine grid). We
imagine the coefficients of the matrix as an approximation to some
function (Jacobian in case of nonlinear iteration). For an example,
assuming that a linear function is sufficiently smooth, an approximation
to the function with $N$ discrete points is close to an approximation
to the function with only $N/2$ (or even $N/4$) discrete points. Thus,
we solve the problem cheaply with $N/2$ grid points (i.e, on a coarser grid) and then we
interpolate the solution to obtain an approximation to the problem
defined on $N$ grid points. The error in the solution thus obtained
has rough components because they were not taken into
account properly while solving with the coarse solver, and this is
where smoother comes into play. This interplay of smoother and coarse
grid correction are complementary. For a more rigorous explanation, an
inclined reader is referred to \cite{rug, stu, tro} where tools from
Fourier analysis is used to explain why smoothening and coarse grid correction 
step works effectively for some problems.

In classical AMG, a set of coarse grid unknowns is selected and the
matrix entries are used to build interpolation rules that define the
prolongation matrix P, and the coarse grid matrix $A_c$ is computed
from the following Galerkin formula
\begin{eqnarray} \label{E:galerkin} A_c = P^{T}AP.
\end{eqnarray}
In contrast to the classical AMG approach, in aggregation based
multigrid, first a set of aggregates $G_{i}$ are defined. Let $N_c$
be the number of such aggregates, then the interpolation matrix $P$ is
defined as follows
\begin{equation*} \label{interp}
P_{ij} =
\begin{cases}
  1, &\text{if $i \in G_{j}$,}\\
  0, &\text{otherwise,}\\
\end{cases}
\end{equation*}
Here, $1 \le i \le N, \, 1 \le j \le N_c$, $N$ being the size of the
original coefficient matrix $A$. Further, we assume that the
aggregates $G_{i}$ are such that
\begin{equation} \label{aggr}
G_{i}\cap G_{j}=\phi,~ \text{for}~ i \neq j~ \text{and}~
\cup_iG_{i}= [1,N]
\end{equation}
Here $[1,\, N]$ denotes the set of integers from $1$ to $N$.
 Notice that the matrix $P$ defined above is an $N \times N_c$ matrix, but since it
has only one non-zero entry (which are ``one'') per row, the matrix
can be defined by a single array containing the indices of the non-zero entries. 
The coarse grid matrix $A_c$ may be computed as follows
\[
(A_c)_{ij} = \sum_{k \in G_i} \sum_{l \in G_j} a_{kl}
\]
where $1 \le i, \ j \le N_c$, and $a_{kl}$ is the $(k,l)th$ entry of $A$.

Numerous aggregation schemes have been proposed in the literature, but
in this paper we consider two of the aggregation schemes as follows
\vspace{-5mm}
\begin{description}
\item {\bf Aggregation based on strength of connection:} This approach
  is closely related to the classical AMG \cite{tro} where one first
  defines the set of nodes $S_i$ to which $i$ is strongly negatively
  coupled, using the Strong/Weak coupling threshold $\beta$:
  \[
  S_i = \{ \, j \neq i \mid a_{ij} < -\beta \ \text{max}|a_{ik}| \, \}.
  \]
  Then an unmarked node $i$ is chosen such that priority is given to
  the node with minimal $M_i$, here $M_i$ being the number of unmarked
  nodes that are strongly negatively coupled to $i$. For a complete algorithm 
 of the coarsening, the reader is referred to \cite{not}. 
\item {\bf Aggregation based on graph matching:} Several graph
  partitioning methods exists, notably, in software form
  \cite{kar3,saad2003,saad2002}. Aggregation for AMG is created by
  calling a graph partitioner with required number of aggregates as an
  input. The subgraph being partitioned are considered as
  aggregates. For instance, in this paper we use this approach by
  giving a call to the METIS graph partitioner routine
  METIS\_PartGraphKway with the graph of the matrix and number of
  partitions as input parameters. The partitioning information is obtained in
  the output argument ``part". The part array maps a given node to its
  partition, i.e., part($i$) = $j$ means that the node $i$ is mapped
  to the $jth$ partition. In fact, the part array essentially
  determines the interpolation operator $P$. For instance, we observe that 
  the ''part`` array is a discrete many to one map. Thus, the $i$th aggregate  
  $G_{i}=\text{part}^{-1}(i)$, where 
  \[ 
   \text{part}^{-1}(i) = \{ \, j \in [1,\, N] \enspace \mid \enspace 
\text{part}(j)=i\,\}
  \]
   Such graph matching techniques were explored in \cite{bra,kim,ras}. For
notational convenience, the method introduced in this paper will be called
  GMG (Graph matching based aggregation MultiGrid).
\end{description}
\vspace{-5mm}
Let $S$ denote the matrix which acts as a smoother in GMG method. The usual
choice of $S$ is a Gauss-Siedel preconditioner \cite{saad96}. However,
in this paper we choose ILU(0) as a smoother, we find that the choice
of ILU(0) as a smoother gives more robustness compared to Gauss-Siedel
method, however, at an additional storage cost. Another aspect that we
explore is to use only two grid
approach but with an incomplete coarse grid solve. That is, we use an
incomplete ILU($t$), where $t$ is the tolerance for dropping the
entries, see \cite{saad96}. The approximation $\tilde{A}_c$ of the
coarse grid operator $A_c$ is given as follows
\[
\tilde{A}_c = \tilde{L}_c\tilde{U}_c,~ \text{where},~
[\tilde{L}_c,\tilde{U}_c] = \text{ILUT}(A_c)
\]
where ILUT stands for ILU($t$). The reason for using
only two grid, and using an incomplete (and possibly parallel)
coarse grid solve is to avoid the recursion in the the typical AMG
method. It may be profitable to solve the coarse grid problem in
parallel and inexactly, when the problem size becomes large. This may
be achieved by a call to one of the several hybrid incomplete solvers
based on ILU \cite{ali} like approximation or by using a sparse
approximate inverse \cite{ben}.  The investigation with the parallel
inexact approximation of the coarse solver will be done in
future, and in this paper, we shall understand the qualitative
behavior such as the convergence and robustness of the proposed
strategy compared to a classical AMG approach found in \cite{not10}.

Let $M=PA_cP^{T}$ denote the coarse grid operator {\em interpolated}
to fine grid, then the two-grid preconditioner without post-smoothing is defined as follows
\begin{eqnarray} \label{twogrid}
  B = (S^{-1} + M^{-1} - M^{-1}AS^{-1})^{-1}. 
\end{eqnarray}
We notice that $M^{-1} \approx PA_{c}^{-1}P^{T}$, thus, an equation of the form 
$Mx=y$ is solved by first restricting $y$ to $y_c=P^T y$, then solving 
with the coarse matrix $A_c$ the following linear system: $A_cx_c = y_c$. 
Finally, prolongating the coarse grid solution $x_c$ to $x=Px_c$. Following 
diagram illustrates the two-grid hierarchy.
\begin{displaymath}
  \xymatrix{
         \cdots \bullet-\bullet-\bullet-\bullet\cdots \ar[d]_{\text{Restrict $y$ to $y_c:=P^{T}y$}} & \cdots \bullet-\bullet-\bullet-\bullet\cdots  \\
         \cdots \bullet-\bullet \cdots \ar[r]_{\text{Solve:$A_cx_c$ = $y_c$}} & \cdots \bullet-\bullet \cdots \ar[u]_{\text{Prolongate $x_c$ to $x:=P x_c$}}} 
\end{displaymath}
The preconditioner $B$ is similar to the combination preconditioner defined in
\cite{ach,kum}, where instead of defining a coarse grid operator a deflation
preconditioner is used. Thus, rather than satisfying a ``filtering
property'', the coarse grid operator satisfies the following ``approximate filtering condition (AFC)'' 
\[
AP \approx MP, \quad \text{(see Theorem (\ref{filter}) on page \pageref{filter})},
\]
where columns of interpolation matrix $P$ spans a subspace of dimension $N_c$. Here, we have considered the exact coarse grid solve, 
the inexact version is similar to the exact two-grid preconditioner \eqref{twogrid} defined 
above except that $M$ is replaced by $\tilde{M}=P\tilde{A}_cP^{T}$, and we denote the inexact 
two grid preconditioner by $\tilde{B}$. 
In Algorithm (\ref{Al:algo2G}), we present the complete iterative algorithm for the inexact case; the algorithm is 
essentially a slightly modified form of algorithm presented in Figure (2.6) in \cite{bar}. The two-grid methods can 
also be integrated in a similar way in an iterative accelerator other than GMRES, to integrate with other accelerators,  
see \cite{bar}.
\begin{algorithm} 
  \caption{\label{Al:algo2G} PSEUDOCODE TO SOLVE $Ax=b$, $A\in\mathbb{R}^{N \times N}$, $x,\, b\in\mathbb{R}^{N}$} 
\begin{algorithmic}
\STATE \begin{center} \underline{OBJECTIVE: To solve Ax = b} \end{center} 
\STATE \underline{SETUP PHASE} \\[0.3cm]
\STATE Call graph partitioner to get partitions in an array, say, part.
\STATE Use part array to form aggregates $G_{i}$ and the prolongation matrix $P$ (subgraphs are aggregates)
\STATE Create coarse grid matrix $A_c \in \mathbb{R}^{N_c \times N_c}$ as follows
\[
(A_c)_{ij} = \sum_{k \in G_i} \sum_{l \in G_j} a_{kl}.
\]
\STATE Factor the coarse grid matrix inexactly: $\tilde{A}_c=\text{ILUT}(A_c)$. Here ILUT is incomplete LU with tolerance.
\STATE Setup smoother: $S=L_0U_0= \text{ILU0}(A)$. Here ILU0 is incomplete LU with zero fill-in \\[0.3cm]
\STATE Define (not to be formed explicitely) two-grid preconditioner $\tilde{B}$ and $\tilde{M}$ as follows
\[ \tilde{B} = (S^{-1} + \tilde{M}^{-1} - \tilde{M}^{-1}AS^{-1})^{-1}, \quad \tilde{M}=P\tilde{A_c}P^{T} \] \\[0.3cm]
\STATE \underline{PRECONDITIONED GMRES ITERATION} \\[0.3cm]
\STATE $x_0$ is an initial guess
\FOR{$j = 1,2,\dots$}
  \STATE Solve $r$ from $\tilde{B}r=b-Ax_0$ (See SOLVE $\tilde{B}q=z$ function below)
  \STATE $v^{(1)}=r/\|r\|_{2}, \quad s:=\|r\|_2 e_1$
  \FOR{$i=1,2,\dots,m$}
    \STATE Solve $w$ from $\tilde{B}w=Av^{(i)}$ (See SOLVE $\tilde{B}q=z$ function below)
    \FOR{$k=1,\dots,i$}
      \STATE $h_{k,i}=(w,v^{(k)}), \quad w = w - h_{k,i}v^{(k)}$
    \ENDFOR
    \STATE $h_{i+1,i} = \|w\|_2, \quad v^{(i+1)} = w/h_{i+1,i}$
    \STATE apply $J_1,\dots,J_{i-1}$ on ($h_{1,i},\dots,h_{i+1,i}$)
    \STATE construct $J_i$, acting on the $i$th and $(i+1)$st component of $h_{.,i}$, \\
	   such that $(i+1)$st component of $J_ih_{.,i}$ is $0$
    \STATE set $s:=J_{i}s$
    \IF{$s(i+1)$ is small enough}
      \STATE UPDATE($\tilde{x},i$) and quit
    \ENDIF
    \ENDFOR
    \STATE UPDATE($\tilde{x},m$)
  \ENDFOR \\[0.3cm]
  \STATE \underline{UPDATE($\tilde{x},m$)}\\[0.2cm]
  \STATE Solve for $y$ in $Hy=\tilde{s}$. 
  Here upper $i \times i$ part of $H$ has $h_{i,j}$ 
    as its element. $\tilde{s}$ represents the first $i$ components of $s$
  \[ \tilde{x} = x^{(0)} + y_1v^{(1)} + y_2 v^{(2)} + \dots + y_iv^{(i)}, \quad s^{(i+1)} = \| b-A\tilde{x}\|_2 \]
\STATE If $\tilde{x}$ is accurate enough then quit else $x^{(0)}=\tilde{x}$ \\[0.3cm]
  \STATE \underline{SOLVE $\tilde{B}q=z$} \\[0.2cm]
  \STATE Solve $St=z$ (use $S=L_0U_0$), solve $\tilde{M}f=z$ (See SOLVE $\tilde{M}g=h$ function below), solve $\tilde{M}q=At$, 
  set $q=t+f-q$ \\[0.3cm]
  \STATE \underline{SOLVE $\tilde{M}g=h$} \\[0.2cm]
  \STATE set $h_c=P^{T}h$, Solve $\tilde{A}_c g_c = h_c$ (use $\tilde{A}_c=\tilde{L}\tilde{U}$), 
  $g=Pg_c$
\end{algorithmic}
\end{algorithm}

\subsection{Analysis of graph based two-grid method}
For any matrix $K$, let $K \succ 0$ denote that the matrix $K$ is
symmetric positive definite and we use the notation $K(:,j)$ to denote 
the $j$th column of $K$, whereas, $K(j,:)$ to denote the $j$th row of $K$. 
If $A \succ 0$, then the inner product $(\, , \,)_A$ defined by $(u,v)_{A} = u^{T}Av$ is a well defined inner
product, and it induces the energy norm $\| \, \|_A$ defined by $\|v\|_A
= (v,v)_A^{1/2}$ for any vector $v$. A matrix $K$ is called
A-selfadjoint if
\[
(Ku,v) = (u,Kv)_A,
\]
or equivalently if
\[
A^{-1}K^{T}A = K.
\]
For any matrix $K$, let span($K$) denote a set 
of all possible linear combination of the columns of the matrix $K$. Let $\|
x \|$ denote the Euclidean norm ($\sum_{i=1}^{n}x_i^{2})^{\frac{1}{2}}$.
In what follows, we assume that the matrix $P$ is orthonormalized, 
such that $P^TP=I$.
The basic linear fixed point method for solving the linear systems
\eqref{system} is given as follows
\begin{align*}
 x^{n+1} = x^{n} + B^{-1}(f-Ax^{n}) = (I-B^{-1}A)u^{n} + B^{-1}b
\end{align*}
Subtracting the equation above with the identity $x=x-B^{-1}Ax+B^{-1}b$ yields
the following equation for the error $e^{n+1}=u-u^{n}$
\begin{align*}
  e^{n+1} = (I-B^{-1}A)e^{n} = (I-B^{-1}A)^{2}e^{n-1} = \dots =
(I-B^{-1}A)^{n+1}e^{0}.
\end{align*}
Choosing $B$ as in Equation \eqref{twogrid}, we have the following relation 
\[ e^{n+1} = (I-S^{-1}A)^{n+1}(I-M^{-1}A)^{n+1}e^{0}\]
Thus, the quality of the preconditioner $B$ depends on how well the smoother
$S$ and the coarse grid preconditioner $M$ acts on the error.

In \cite{ach,kum,wagner1997a,wagner1997c,wagner1997b}, a composite preconditioner 
similar to the one in \eqref{twogrid} is proposed, where, the matrix $M$ is replaced 
by a preconditioner, say, $M_f$ that deflates the eigenvector corresponding to smaller eigenvalue. 
The preconditioner $M_f$ is constructed such that it satisfies a ``filtering property'' 
as follows
\[ M_ft = At,\]
where $t$ is a filter vector. 
In \cite{ach}, the filter vector is choosen to be a Ritz vector corresponding to smallest 
Ritz value in magnitude obtained after a couple of iterations of ILU(0) preconditioned 
matrix. In \cite{wagner1997a}, first the iteration is started 
with a fixed set of filter vector, and later the filter vector is changed adaptively using error 
vector. In \cite{kum}, authors fixed the filter vector to be a vector of all ones and show that 
the composite preconditioner is efficient for a range of convection-diffusion
type problems. In brief, for an effective method, the columns of the
interpolation matrix $P$ should approximate well the eigenvectors corresponding
to low eigenvalues. One possibility is to use $P$ to construct a deflation
preconditioner as shown in \cite{nab2004}.

The following theorem shows that preconditioner $M$ that corresponds to a coarse grid correction 
 step satisfies a more general \emph{but} approximate filtering condition. 
\begin{theorem} \label{filter}
If the coarse grid correction preconditioner $M=PA_cP^{T}$ and the
coarse grid operator $A_c=P^{T}AP$ are nonsingular, then following holds:
\vspace{-5mm}
\begin{packed_enum}
\item $MP \approx AP$
\item If $t=[1,1,\dots,1]$, then $t \in span(P)$ and dim(span(P)) = $N_c$
\end{packed_enum}
\vspace{-5mm}
\end{theorem}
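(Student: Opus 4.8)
The plan is to establish both assertions by direct computation, using the standing hypothesis $P^{T}P=I$ and the combinatorial structure of the aggregation map; the only subtle point will be pinning down what ``$\approx$'' means in the first assertion.

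For assertion (1), I would begin from $M=PA_cP^{T}$ with $A_c=P^{T}AP$, so that $M=PP^{T}APP^{T}$. Multiplying on the right by $P$ and cancelling $P^{T}P=I$ yields the \emph{exact} identity $MP=PP^{T}AP=\Pi AP$, where $\Pi:=PP^{T}$ is the orthogonal projector onto $\mathrm{span}(P)$. Consequently $AP-MP=(I-\Pi)AP$ is exactly the part of $AP$ lying in the orthogonal complement of the coarse space, and in particular $P^{T}(AP-MP)=A_c-A_c=0$, i.e. $AP$ and $MP$ coincide once tested against $\mathrm{span}(P)$. The relation $MP\approx AP$ is then the statement that $\|(I-\Pi)AP\|$ is small; I would justify this not by a worst-case estimate but through the design of the aggregates: the columns of $P$ are chosen so that $\mathrm{span}(P)$ nearly contains the slow-to-converge (near-kernel, low-energy) modes of $A$, making $\mathrm{span}(P)$ approximately $A$-invariant, hence $AP$ approximately in $\mathrm{span}(P)$. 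When $\mathrm{span}(P)$ is exactly $A$-invariant this becomes an equality, recovering the filtering identity $M_f t=At$ as the special case of a one-dimensional invariant subspace.

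For assertion (2), I would work first with the un-orthonormalized aggregation matrix $P_0$ defined by $(P_0)_{ij}=1$ if $i\in G_j$ and $0$ otherwise. By \eqref{aggr} the aggregates $G_j$ partition $[1,N]$, so every row of $P_0$ has exactly one nonzero entry, equal to $1$; hence $P_0 t_c = t$ where $t_c$ and $t$ are the all-ones vectors of length $N_c$ and $N$ respectively, so $t\in\mathrm{span}(P_0)$. Since orthonormalization replaces $P_0$ by $P_0R$ with $R$ invertible (here $R$ is the diagonal rescaling by $|G_j|^{-1/2}$, the columns of $P_0$ having disjoint supports and being therefore orthogonal), the column space is unchanged, so $t\in\mathrm{span}(P)=\mathrm{span}(P_0)$. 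For the dimension, the $N_c$ columns of $P_0$ are nonzero with pairwise disjoint supports and hence linearly independent, giving $\dim\mathrm{span}(P)=N_c$; this also follows from the hypothesis that $A_c=P^{T}AP$ is nonsingular, which forces $P$ to have full column rank $N_c$.

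The main obstacle, as noted, is purely expository: the algebra delivers the clean exact identities $MP=\Pi AP$ and $P^{T}MP=P^{T}AP$, but upgrading them to $MP\approx AP$ relies on the heuristic that the coarse space approximates an $A$-invariant subspace. I would therefore present the exact identities as the rigorous core of the theorem and treat $MP\approx AP$ as their interpretation in the AMG context, rather than chasing a quantitative bound, which would require extra assumptions on how accurately $P$ resolves the low modes of $A$.
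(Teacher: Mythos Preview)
Your argument is correct, and for assertion~(2) it matches the paper's reasoning (you are simply more careful about distinguishing the Boolean aggregation matrix $P_0$ from its orthonormalization $P$; the paper works with the Boolean $P$ directly and writes $\sum_j P(:,j)=t$). For assertion~(1) you take a genuinely different route: you compute $MP$ directly, using $P^TP=I$ to obtain the \emph{exact} identity $MP=\Pi AP$ with $\Pi=PP^T$, and then locate the ``$\approx$'' in the residual $(I-\Pi)AP$. The paper instead works through the error-propagation operator, writing $(I-M^{-1}A)P\approx P-PA_c^{-1}P^{T}AP=P-PA_c^{-1}A_c=0$, where the ``$\approx$'' enters in the first step because the rank-deficient $M$ has no true inverse and $M^{-1}$ is being used as shorthand for the coarse-correction operator $PA_c^{-1}P^{T}$. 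Your version has the advantage of producing an unambiguous exact identity and pinpointing the approximation as the component of $AP$ orthogonal to the coarse space; the paper's version, on the other hand, ties the result directly to the two-grid iteration matrix $I-M^{-1}A$, which is the object whose behaviour one ultimately cares about in the convergence analysis.
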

\begin{proof}
We have 
\begin{align*}
(I-M^{-1}A)P &\approx P - P{A_c}^{-1}P^{T}AP \\
	     &= P - P{A_c}^{-1}A_c \\
             &= 0
\end{align*}
From equations \eqref{interp} and \eqref{aggr}, we find that $P$ is an 
$N \times N_c$ matrix, and the $j$th column of the matrix $P$ has a 
non-zero entry $P(i,j)=1$ if and only if $i \in G_j$. Since the aggregates $G_j's$
cover all the nodes in the set $[1,N]$, for all $i \in [1,N]$, there exists an 
aggregate $G_j$ such that $i = G_j(k)$ for some $k$, and consequently $P(i,j)=1$. Moreover, 
since the aggregates $G_j's$ do not intersect, such $j$ is unique. 
In other words, for each $i \in [1, N]$, there exists \emph{one and only one} column $P(:,j)$ of $P$ 
such that the $i$th entry of column $P(:,j)$ is 1. Hence we have 
\[ Pt= \sum_{1 \le i \le N_c}P(:,j) = t \]
and since each columns of $P$ are linearly independent we have dim(span(P)) = $N_c$.
\end{proof}
\vspace{-3mm}

\begin{theorem}
  If $A \succ 0$, then $A_c \succ 0$.
\end{theorem}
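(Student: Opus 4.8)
The plan is to prove the statement directly from the definition of positive definiteness, using only the hypothesis $A\succ 0$ together with the fact that the interpolation matrix $P$ has full column rank. That rank condition is exactly the second part of Theorem~\ref{filter}: the columns of $P$ are linearly independent (so $\dim(\mathrm{span}(P))=N_c$), which itself follows from the disjointness-and-covering property \eqref{aggr} of the aggregates; alternatively it is immediate from the normalization $P^{T}P=I$ assumed in this section. So the first step is simply to record that $Py\neq 0$ whenever $y\neq 0$.

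Next I would check symmetry of $A_c=P^{T}AP$. Since $A=A^{T}$,
\[
A_c^{T}=(P^{T}AP)^{T}=P^{T}A^{T}P=P^{T}AP=A_c,
\]
so $A_c$ is symmetric. Then I would verify strict positivity of the quadratic form: for any $y\in\mathbb{R}^{N_c}$ with $y\neq 0$, full column rank of $P$ gives $Py\neq 0$, and hence
\[
y^{T}A_c y=y^{T}P^{T}AP y=(Py)^{T}A(Py)>0,
\]
the inequality being $A\succ 0$ applied to the nonzero vector $Py$. Combining this with symmetry yields $A_c\succ 0$, which is the claim. (Note also that by Theorem~\ref{filter} $A_c$ is then automatically nonsingular, consistent with the hypotheses used there.)

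This argument is essentially routine; the only place where the earlier development is genuinely needed is the injectivity of $P$, i.e.\ that no nontrivial linear combination of the aggregate indicator columns vanishes. Without it one could conclude only that $A_c$ is positive semidefinite. So that is the step I would be most careful to cite explicitly; everything else is the standard congruence argument for preservation of positive definiteness under a full-rank change of variables.
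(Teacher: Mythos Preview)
Your proof is correct and follows essentially the same congruence argument as the paper: both show $(A_c x,x)=(APx,Px)>0$ for $x\neq 0$ using $A\succ 0$ together with injectivity of $P$, the paper invoking the one-nonzero-per-row structure of $P$ where you cite full column rank (equivalently $P^{T}P=I$ or Theorem~\ref{filter}). Your explicit verification of symmetry is a small but welcome addition that the paper leaves implicit.
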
\vspace{-5mm}
\begin{proof}
  We have $A_c = P^TAP$ and 
  \begin{eqnarray*}
     (A_cx, x) &=& (P^TAPx, x),~\text{for}~x \neq 0\\ 
    &=& (APx, Px), ~\text{for}~x \neq 0 \\
    &>& 0, ~\text{for}~x \neq 0. 
  \end{eqnarray*}
  Notice that we use the fact that $P$ is a boolean matrix, i.e., it
  has one and only one non-zero entry equal to ``one'' per row. Thus, $Px
  \neq 0$ for $x \neq 0, ~x \in R^{N_c}$. Hence the
  theorem.
\end{proof}
However, the global preconditioner corresponding to the coarse grid solve 
represented by $M=PA_cP^{T}$ or $M^{-1}A$ is not necessarily SPD. We have 
the following counter examples
\begin{theorem}
 $A\succ0$ does not imply that $M\succ0$ or $M^{-1}A \succ 0$. 
\end{theorem}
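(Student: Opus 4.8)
The plan is to settle the statement by exhibiting one explicit small counterexample, since in both cases the failure is structural rather than delicate. For the first assertion I would first record the general reason: whenever genuine coarsening takes place, i.e. $N_c<N$, the matrix $M=PA_cP^{T}$ cannot be positive definite at all, because $P^{T}\colon\mathbb{R}^{N}\to\mathbb{R}^{N_c}$ has a kernel of dimension $N-N_c>0$ and $Mx=0$ for every $x$ in that kernel; at best $M$ is positive semidefinite, since $(Mx,x)=(A_cP^{T}x,P^{T}x)\ge 0$ by the preceding theorem. To make this concrete I would take $N=2$ with a single aggregate $G_1=\{1,2\}$, so that $P=(1,1)^{T}$, and pick a generic SPD matrix, e.g. $A=\begin{pmatrix}2&-1\\-1&3\end{pmatrix}\succ0$. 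Then $A_c=P^{T}AP=3\succ0$ while $M=3\begin{pmatrix}1&1\\1&1\end{pmatrix}$ has eigenvalues $0$ and $6$, so $M\not\succ0$.

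For the second assertion I would reuse the same data. Since $M$ is singular, the object that actually appears in the preconditioner is the coarse-grid correction $PA_c^{-1}P^{T}$ (this is precisely how ``$M^{-1}$'' is applied in Algorithm~(\ref{Al:algo2G})). With the matrices above, $PA_c^{-1}P^{T}A=\frac{1}{3}\begin{pmatrix}1&1\\1&1\end{pmatrix}\begin{pmatrix}2&-1\\-1&3\end{pmatrix}=\frac{1}{3}\begin{pmatrix}1&2\\1&2\end{pmatrix}$, which is (a) not symmetric, hence not a candidate for being SPD in the usual sense, and (b) has eigenvalues $0$ and $1$, hence is singular. Either observation shows $M^{-1}A\not\succ0$. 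For context I would also note that $M^{-1}A$ can never acquire a \emph{negative} eigenvalue: it is similar to the symmetric positive semidefinite matrix $A^{1/2}PA_c^{-1}P^{T}A^{1/2}$ via conjugation by $A^{1/2}$, so the only obstructions to positive definiteness are exactly the loss of symmetry and the rank deficiency displayed by the example.

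The only real ``obstacle'' is cosmetic, namely choosing the cleanest witness: one wants an $A$ that is unambiguously SPD, an aggregation as small as possible, and an $A$ asymmetric enough that $PA_c^{-1}P^{T}A$ is visibly non-symmetric — a diagonal or circulant choice of $A$ would accidentally symmetrize the product and blunt the point. Beyond that the verification is just two $2\times2$ multiplications and reading off a trace and determinant.
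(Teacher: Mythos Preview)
Your proof is correct and follows essentially the same idea as the paper's: exhibit a small explicit counterexample by picking a nonzero vector in $\ker P^{T}$, which forces $Mx=0$ and hence $(Mx,x)=0$. The paper does this with $N=4$, two aggregates, and $x=(1,0,-1,0)^{T}$ without ever specifying $A$; you do it with $N=2$, one aggregate, and a concrete SPD $A$, which is slightly smaller and more self-contained. For the $M^{-1}A$ part you are actually more careful than the paper: you make explicit that ``$M^{-1}$'' must be read as $PA_c^{-1}P^{T}$ (since $M$ is singular), compute the product, and read off the loss of symmetry and the zero eigenvalue, whereas the paper simply asserts $(M^{-1}Ax,x)\not>0$ for the same kernel vector $x$ without addressing what $M^{-1}$ means. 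Your added remark that $M^{-1}A$ is similar to $A^{1/2}PA_c^{-1}P^{T}A^{1/2}\succeq 0$, so the only possible failures are non-symmetry and rank deficiency, is a nice sharpening not present in the paper.
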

\begin{proof}
  Let $N=4$ be the size of $A$. Let there be two aggregates, $G_1 = \{ \, 1, 3 \, \}$ 
and $G_2 = \{ \, 2, 4 \, \}$, then the restriction operator $P^{T}$ is defined as follows
$P^{T}=\left[\begin{array}{cccc} 
  1 & 0 & 1 & 0 \\ 
  0 & 1 & 0 & 1
     \end{array}
   \right]$,
choosing $x^{T}=[1,0,-1,0]$, we have $P^{T}x=0$. Thus, we have 
\[ (Mx,x) \not> 0, \quad (M^{-1}Ax,x) \not> 0~\text{for all $x \neq 0$}. \qedhere \] 
\end{proof}
In literature, much results have been proved when the coefficient
matrix is a diagonally dominant $M-$matrix. We collect some relevant
results, and use them to understand the proposed method.
\begin{definition}
  Let $G(A)=(V,E)$ be the adjacency graph of a matrix $A \in
  \mathbb{R}^{N \times N}$.  The matrix $A$ is called irreducible if
  any vertex $i \in V$ is connected to any vertex $j \in
  V$. Otherwise, $A$ is called reducible.
\end{definition}
\begin{definition}
  A matrix $A \in \mathbb{R}^{N \times N}$ is called an $M-$matrix if
  it satisfies the following three properties: 
\vspace{-5mm}
\begin{packed_enum}
  \item $a_{ii}>0$ for $i=1,\dots,N$
  \item $a_{ij}\le 0$ for $i \neq j$, $i,j=1,\dots,N$
  \item $A$ is non-singular and $A^{-1} \ge 0$
  \end{packed_enum}
\end{definition}
\begin{definition}
  A square matrix $A$ is strictly diagonally dominant if the following
  holds
  \[
  |a_{ii}| > \sum_{j \neq i}|a_{ij}|, i=1,\dots,N
  \]
  and it is called irreducibly diagonally dominant if $A$ is
  irreducible and the following holds
  \[
  |a_{ii}| \geq \sum_{j \neq i}|a_{ij}|, i=1,\dots,N
  \]
  where strict inequality holds for atleast one $i$.
\end{definition}
A simpler criteria for $M-$matrix property is given by the following
theorem.
\begin{theorem}
  If the coefficient matrix $A$ is strictly or irreducibly diagonally
  dominant and satisfies the following conditions
 \vspace{-5mm} \begin{packed_enum}
  \item $a_{ii}>0$ for $i=1,\dots,N$
  \item $a_{ij}\le 0$ for $i \neq j$, $i,j=1,\dots,N$
  \end{packed_enum} \vspace{-5mm}
  then $A$ is an $M-$matrix.
\end{theorem}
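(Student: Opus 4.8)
The plan is to verify the three defining properties of an $M$-matrix one at a time. Properties (1) and (2) in the definition are precisely the two hypotheses listed in the statement, so no work is needed there; everything reduces to establishing property (3): that $A$ is nonsingular and $A^{-1}\ge 0$.

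First I would dispatch nonsingularity. In the strictly diagonally dominant case this is immediate from Gershgorin's disc theorem: every eigenvalue $\lambda$ lies in a disc centred at some $a_{ii}$ of radius $\sum_{j\neq i}|a_{ij}|<a_{ii}$, and since $a_{ii}>0$ no such disc reaches $0$. In the irreducibly diagonally dominant case one needs the sharper Taussky nonsingularity criterion, whose short proof I would reproduce: if $Ax=0$ with $x\neq 0$, pick an index $i$ maximising $|x_i|$; the $i$th row inequality $|a_{ii}||x_i|\le\sum_{j\neq i}|a_{ij}||x_j|\le\big(\sum_{j\neq i}|a_{ij}|\big)|x_i|$ together with $|a_{ii}|\ge\sum_{j\neq i}|a_{ij}|$ forces equality throughout, which forces $|x_j|=|x_i|$ for every neighbour $j$ of $i$; by irreducibility the graph is connected, so $|x_j|=|x_i|$ for all $j$, contradicting the existence of a row where the diagonal dominance inequality is strict.

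Next, to obtain $A^{-1}\ge 0$, I would use the splitting $A=D-B$ with $D=\mathrm{diag}(a_{11},\dots,a_{NN})$ positive and $B$ given by $B_{ij}=-a_{ij}\ge 0$ for $i\neq j$, $B_{ii}=0$, so $B\ge 0$. Write $A=D(I-C)$ with $C=D^{-1}B\ge 0$. The crucial quantitative step is $\rho(C)<1$. The $i$th row sum of $C$ equals $\big(\sum_{j\neq i}|a_{ij}|\big)/a_{ii}$, which is $<1$ for every $i$ in the strict case, giving $\|C\|_\infty<1$ and hence $\rho(C)<1$ at once. In the irreducible case the row sums are all $\le 1$ with strict inequality for at least one index $i_0$, and $C$ is nonnegative and irreducible (it has the same off-diagonal pattern as $A$), so by Perron--Frobenius $\rho(C)$ is an eigenvalue with a strictly positive eigenvector $v$; if $\rho(C)=1$ then $v_i=\sum_j C_{ij}v_j$ for all $i$, and comparing with $\max_j v_j$ and using the connectedness of the graph to propagate from the strict row $i_0$ yields $\max_j v_j<\max_j v_j$, a contradiction, so $\rho(C)<1$.

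Finally, with $\rho(C)<1$ the Neumann series $(I-C)^{-1}=\sum_{k\ge 0}C^k$ converges and is a sum of nonnegative matrices, hence $(I-C)^{-1}\ge 0$; therefore $A^{-1}=(I-C)^{-1}D^{-1}\ge 0$ since $D^{-1}$ is diagonal with positive entries. Combined with the two sign hypotheses, this shows $A$ is an $M$-matrix. I expect the main obstacle to be the irreducibly diagonally dominant case: both the nonsingularity argument and the bound $\rho(C)<1$ genuinely require Perron--Frobenius / the connectedness of the adjacency graph to upgrade a single strict inequality into a global statement, whereas the strictly diagonally dominant case is essentially a one-line norm estimate.
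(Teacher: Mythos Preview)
Your argument is correct. Note, however, that the paper does not actually supply a proof of this theorem: it is stated as a standard criterion for the $M$-matrix property and used as background, with no \texttt{proof} environment following it. So there is nothing in the paper to compare your approach against; you have simply filled in a result the paper quotes without proof.

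A couple of minor remarks on your write-up. First, the separate nonsingularity argument via Gershgorin/Taussky is redundant: once you establish $\rho(C)<1$ for $C=D^{-1}B$, invertibility of $I-C$ (hence of $A=D(I-C)$) follows immediately, so the whole proof can be organised around that single spectral radius estimate. Second, in the irreducible case your Perron--Frobenius contradiction is phrased as ``propagate from the strict row $i_0$''; the standard way to run it is to start from an index $k$ where the positive Perron eigenvector $v$ attains its maximum $M$, use $v_k=\sum_j C_{kj}v_j\le M\sum_j C_{kj}\le M$ to force $v_j=M$ for every neighbour of $k$, propagate by irreducibility to get $v\equiv M$, and only then evaluate at the strict row $i_0$ to obtain $M=v_{i_0}=M\sum_j C_{i_0 j}<M$. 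Your sketch has the right ingredients but the direction of propagation is stated backwards.
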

\begin{theorem}[\cite{kim}] \label{Th:Mprop} If $A \in \mathbb{R}^{N
    \times N}$ is a strictly or irreducibly diagonally dominant
  $M-$matrix, then so is the coarse grid matrix $A_c=P^{T}AP$.
\end{theorem}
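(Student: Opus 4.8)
The plan is to reduce the claim to the preceding theorem, namely the $M$-matrix criterion for a strictly or irreducibly diagonally dominant matrix with positive diagonal and nonpositive off-diagonal entries: it therefore suffices to check that $A_c$ has nonpositive off-diagonal entries, positive diagonal entries, and is strictly (resp. irreducibly) diagonally dominant whenever $A$ is. I would work throughout with the explicit entry formula $(A_c)_{ij}=\sum_{k\in G_i}\sum_{l\in G_j}a_{kl}$. (If instead one uses the orthonormalized $P$ with $P^{T}P=I$, the corresponding coarse matrix is obtained from this one by conjugation with a positive diagonal matrix, which leaves all three properties intact, so there is no loss of generality.)

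The two easy ingredients come first. For the \emph{sign of the off-diagonals}: if $i\neq j$ then $G_i\cap G_j=\phi$ by \eqref{aggr}, so every pair $(k,l)$ with $k\in G_i,\ l\in G_j$ satisfies $k\neq l$, whence $a_{kl}\le0$ and therefore $(A_c)_{ij}\le0$. For \emph{diagonal dominance}, the crucial observation is that the $i$th row sum of $A_c$ equals an aggregated row sum of $A$: since $\cup_j G_j=[1,N]$,
\[
\sum_{j=1}^{N_c}(A_c)_{ij}=\sum_{k\in G_i}\sum_{l=1}^{N}a_{kl}=\sum_{k\in G_i}\Big(a_{kk}+\sum_{l\neq k}a_{kl}\Big),
\]
and each term in the last sum is the $k$th row sum of $A$, hence $\ge0$ by weak diagonal dominance (and $>0$ if row $k$ of $A$ is strictly dominant). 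Combined with the sign condition this gives $\sum_{j\neq i}|(A_c)_{ij}|=-\sum_{j\neq i}(A_c)_{ij}\le (A_c)_{ii}$, i.e.\ $A_c$ is weakly diagonally dominant; if $A$ is strictly diagonally dominant then every row sum of $A$ is positive, so $(A_c)_{ii}\ge\sum_{k\in G_i}(\text{$k$th row sum of }A)>0$ and the dominance is strict. In the strictly dominant case we are done by the preceding theorem.

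It remains to treat the irreducibly dominant case, and here the positivity of the diagonal and the strict-dominance-of-one-row requirement both reduce to showing that \textbf{$A_c$ is irreducible} — this is the step I expect to be the main obstacle. The point is that collapsing the aggregates loses no edges: for $i\neq j$ all terms of $(A_c)_{ij}=\sum_{k\in G_i,\ l\in G_j}a_{kl}$ have the \emph{same} (nonpositive) sign, so there is no cancellation, and $(A_c)_{ij}=0$ forces $a_{kl}=0$ for every $k\in G_i,\ l\in G_j$; equivalently, any edge of $G(A)$ running from $G_i$ into $G_j$ produces the edge $i\to j$ in $G(A_c)$. Given $i\neq j$, take a path in $G(A)$ from a vertex of $G_i$ to a vertex of $G_j$ (it exists because $A$ is irreducible), project it onto the aggregate indices, and delete the steps that remain within a single aggregate; what is left is a walk from $i$ to $j$ in $G(A_c)$, so $A_c$ is irreducible. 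An irreducibly diagonally dominant matrix cannot have a zero diagonal entry (a zero diagonal entry would force the whole row to vanish, contradicting irreducibility), so $(A_c)_{ii}\ge0$ upgrades to $(A_c)_{ii}>0$; moreover the single strictly dominant row $k_0$ of $A$ lies in some $G_{i_0}$ and makes the $i_0$th row sum of $A_c$ strictly positive, so $A_c$ is irreducibly diagonally dominant. (The degenerate case $N_c=1$ is immediate: $A_c$ is then the $1\times1$ matrix with entry $\sum_{k,l}a_{kl}=\sum_k(\text{$k$th row sum of }A)>0$.) Applying the preceding $M$-matrix criterion to $A_c$ now finishes the proof.
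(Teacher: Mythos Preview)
Your argument is correct and complete; the paper itself does not supply a proof here but simply refers the reader to \cite{kim}, so you have in fact filled in what the paper omits. The structure you chose---verify the sign pattern and diagonal dominance via the explicit entry formula, then establish irreducibility by observing that the off-diagonal sums involve only nonpositive terms and hence cannot cancel---is the standard route and is essentially what the cited reference does.

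One small caveat on your parenthetical remark: conjugation by a positive diagonal matrix does \emph{not} preserve diagonal dominance in general (e.g.\ $A=\bigl(\begin{smallmatrix}2&-1\\-1&2\end{smallmatrix}\bigr)$ with $D=\mathrm{diag}(1,3)$ gives $DAD=\bigl(\begin{smallmatrix}2&-3\\-3&18\end{smallmatrix}\bigr)$, whose first row is not dominant). What \emph{is} preserved under $A_c\mapsto DA_cD$ is the sign pattern, irreducibility, and the $M$-matrix property itself (since $(DA_cD)^{-1}=D^{-1}A_c^{-1}D^{-1}\ge0$), so the orthonormalized coarse matrix remains an $M$-matrix; but the full conclusion ``strictly/irreducibly diagonally dominant $M$-matrix'' need not transfer verbatim. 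This does not affect your main proof, which works directly with the Boolean $P$ matching the paper's entry formula $(A_c)_{ij}=\sum_{k\in G_i}\sum_{l\in G_j}a_{kl}$.
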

\vspace{-7mm}
\begin{proof}
  The theorem is proved in \cite{kim}.
\end{proof}
\begin{theorem}[\cite{mei}] \label{iluconv} If the coefficient matrix $A$ is symmetric
  $M-$matrix, and let $S=\tilde{L}\tilde{L}^{T}$ be the incomplete
  cholesky factorization, then the fixed point iteration with the
  error propogation matrix $I-S^{-1}A$ is convergent.
\end{theorem}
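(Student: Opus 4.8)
The plan is to reconstruct the classical Meijerink--van der Vorst argument in two stages. First, establish that the incomplete Cholesky factor $\tilde L$ actually exists --- that is, that no zero or negative pivot is ever encountered during the incomplete elimination. Second, observe that the factorization yields a splitting $A = S - R$ with $S = \tilde L\tilde L^{T}$, show that this is a \emph{regular splitting} (meaning $S^{-1}\ge 0$ and $R\ge 0$ entrywise), and invoke the classical fact that a regular splitting of a matrix with nonnegative inverse has iteration matrix of spectral radius strictly less than one. Since $A$ is a symmetric $M$-matrix we have $A^{-1}\ge 0$, and $I-S^{-1}A = S^{-1}(S-A) = S^{-1}R$, so this gives $\rho(I-S^{-1}A)<1$, which is exactly the asserted convergence.

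For the existence stage I would induct on the steps of the incomplete symmetric Gaussian elimination, maintaining the invariant that the active trailing block is always a symmetric $M$-matrix, with the retained sparsity pattern symmetric and containing the diagonal so that a pivot is never discarded. In the inductive step, after eliminating one unknown the trailing block is the Schur complement $A_{22}-a_{11}^{-1}a_{21}a_{21}^{T}$; because $a_{11}>0$ and $a_{21}\le 0$, the rank-one correction is entrywise $\le 0$, and a Schur complement of an $M$-matrix with respect to a principal submatrix is again an $M$-matrix, diagonal dominance and the nonnegative-inverse property both being preserved. Subsequently dropping an out-of-pattern off-diagonal entry only moves some $a_{ij}\le 0$ toward zero, which by the monotonicity/comparison property of $M$-matrices keeps the block an $M$-matrix. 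Hence every pivot is the leading entry of an $M$-matrix, thus strictly positive, so $\tilde L$ exists; moreover $\tilde L$ is lower triangular with positive diagonal and nonpositive off-diagonal entries, so writing $\tilde L = D - N$ with $D>0$ diagonal and $N\ge 0$ strictly lower triangular makes $\tilde L^{-1} = (I-D^{-1}N)^{-1}D^{-1}$ a finite nonnegative Neumann sum; therefore $\tilde L^{-1}\ge 0$ and $S^{-1}=(\tilde L^{T})^{-1}\tilde L^{-1}\ge 0$.

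It remains to show $R := S-A \ge 0$, and this is the step I expect to be the main obstacle. The matrix $R$ is supported exactly on the out-of-pattern positions, where it accumulates, with reversed sign, the fill-ins discarded during the incomplete elimination; each discarded fill-in is an off-diagonal entry of an $M$-matrix Schur complement and hence $\le 0$, so discarding it adds a nonnegative amount to the corresponding entry of $R$. Turning this intuition into a proof requires a careful induction tracking how discarded and retained entries interact across successive elimination steps; that bookkeeping, together with the $M$-matrix invariant above, is precisely the content of \cite{mei}. Once $R\ge 0$ and $S^{-1}\ge 0$ are in hand, $A=S-R$ is a regular splitting, and combined with $A^{-1}\ge 0$ (valid because $A$ is an $M$-matrix) Varga's theorem on regular splittings yields $\rho(S^{-1}R)<1$, that is, $\rho(I-S^{-1}A)<1$, so the fixed point iteration converges. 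Since the statement is attributed to \cite{mei}, one may of course simply cite that reference for the complete argument.
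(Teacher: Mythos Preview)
Your proposal is correct and in fact goes well beyond what the paper does: the paper gives no proof at all for this theorem, merely stating it with the attribution \cite{mei} and then immediately using it. So the paper's ``proof'' is the citation itself, which your final sentence also allows for. The argument you sketch---existence of the incomplete Cholesky factor via an $M$-matrix invariant on successive Schur complements, nonnegativity of $\tilde L^{-1}$ and hence of $S^{-1}$, nonnegativity of the dropped fill $R=S-A$, and then Varga's regular-splitting theorem to conclude $\rho(I-S^{-1}A)<1$---is precisely the Meijerink--van~der~Vorst proof that the paper is citing, so there is no divergence in approach, only in the level of detail supplied.
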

Theorem \ref{iluconv} above tells us that for an M-matrix, ILU(0) preconditioned method 
will be convergent by itself. However, the convergence is usually slow due to large 
iteration count with increasing problem size. Combining ILU(0) with a coarse grid 
correction leads to convergence rate which depends mildly on the problem size.
Following result shows that the inexact factorization is as stable as
the exact factorization of the coarse grid operator.
\begin{theorem}\label{Th:stable}
  If the given coefficient matrix $A$ is a symmetric irreducibly
  diagonally dominant $M-$matrix, and if the inexact coarse grid
  operator $\tilde{A}_c$ is based on incomplete LU factorization as follows 
  \[
  \tilde{A}_c = \textup{CHOLINC}(A_c),
  \]
where \textup{CHOLINC} is the incomplete Cholesky factorization, then the
construction of $\tilde{A}_c$ is atleast as stable as the construction 
of an exact decomposition of $A_c$ without pivoting.
\end{theorem}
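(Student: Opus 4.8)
\emph{Proof plan.} I would measure the stability of a (complete or incomplete) Cholesky decomposition by the element growth in the successive Schur complements, and show that for $A_c$ this growth is bounded by $\max_i (A_c)_{ii}$ in both cases, so that the incomplete construction cannot be less stable than the exact one.

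First, $A_c$ inherits the hypotheses on $A$: by Theorem~\ref{Th:Mprop}, $A_c = P^{T}AP$ is again a symmetric, irreducibly diagonally dominant $M$-matrix, hence, being Hermitian and irreducibly diagonally dominant with positive diagonal, positive definite, so the exact Cholesky factorization $A_c = LL^{T}$ exists without pivoting. Next I would record the elementary fact that Gaussian elimination applied to an $M$-matrix produces Schur complements $A_c = A_c^{(1)}, A_c^{(2)}, \dots$ that remain $M$-matrices with preserved diagonal dominance and non-increasing diagonal entries, because the rank-one update subtracted at each step is entrywise nonnegative (a product of two nonpositive off-diagonal columns divided by a positive pivot). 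Hence every entry that occurs during the exact factorization obeys $|(A_c^{(k)})_{ij}| \le (A_c^{(k)})_{ii} \le (A_c)_{ii} \le \mu$, where $\mu := \max_i (A_c)_{ii}$, and $|L_{ij}| \le \sqrt{\mu}$; this is precisely the sense in which the exact no-pivoting decomposition of $A_c$ is stable, and it is the benchmark against which \textup{CHOLINC} is to be compared.

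The core of the proof is to push the same bound through the incomplete process. For a symmetric sparsity pattern $\mathcal{S}$ containing the diagonal, I would show by induction on the elimination step that each incomplete Schur complement $\tilde{A}_c^{(k)}$ is again a diagonally dominant $M$-matrix with $(\tilde{A}_c^{(k+1)})_{ii} \le (\tilde{A}_c^{(k)})_{ii}$. The induction step splits according to whether $(i,k) \in \mathcal{S}$: in the active case the off-diagonal sum of row $i$ grows by $|\tilde{L}_{ik}|\sum_{(i,l)\in\mathcal{S}}|(\tilde{A}_c^{(k)})_{kl}|$ while its diagonal drops by $|\tilde{L}_{ik}|\,|(\tilde{A}_c^{(k)})_{ik}|$, and the inductive diagonal-dominance inequalities for rows $i$ and $k$, combined with the symmetry $|(\tilde{A}_c^{(k)})_{ik}| = |(\tilde{A}_c^{(k)})_{ki}|$, give exactly the required inequality; discarding a fill entry merely removes a nonpositive number from an off-diagonal position, which only helps. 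Irreducibility enters only to keep the pivots strictly positive, so that \textup{CHOLINC} is well defined — this is the Meijerink--van der Vorst mechanism already invoked for Theorem~\ref{iluconv}, applied here to each trailing submatrix. It follows that $|(\tilde{A}_c^{(k)})_{ij}| \le (\tilde{A}_c^{(k)})_{ii} \le \mu$ and $|\tilde{L}_{ij}| \le \sqrt{\mu}$ throughout the incomplete factorization.

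Combining the two parts, the element growth of $\textup{CHOLINC}(A_c)$ is controlled by the same quantity $\mu$ that controls the exact no-pivot factorization, so the incomplete construction is at least as stable; one may even observe that the off-diagonal Schur-complement entries produced by the incomplete process are pointwise no larger in modulus than those of the exact process, so the bound is if anything sharper. The step I expect to be the real obstacle is the case analysis showing that the incomplete Schur complements stay diagonally dominant $M$-matrices with positive pivots; the remainder is assembly of Theorem~\ref{Th:Mprop} with standard facts about diagonally dominant matrices.
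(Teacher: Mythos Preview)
Your proposal is correct and follows the same two-step skeleton as the paper: first invoke Theorem~\ref{Th:Mprop} to transfer the symmetric, irreducibly diagonally dominant $M$-matrix property from $A$ to $A_c$, and then argue that incomplete Cholesky on such a matrix is at least as stable as the exact factorization. The difference is only in the second step. The paper dispatches it in one line by citing Theorem~3.2 of Meijerink--van der Vorst~\cite{mei}, whereas you essentially unpack that result: you quantify stability through element growth in the successive Schur complements, bound everything by $\mu=\max_i (A_c)_{ii}$ for the exact process, and then run the same induction through the incomplete process, using that dropping a nonpositive off-diagonal entry can only help diagonal dominance and the $M$-matrix sign pattern. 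Your self-contained argument buys transparency---the reader sees exactly why no growth occurs and why pivots stay positive---at the cost of having to carry the case analysis you flag as the real obstacle; the paper's citation buys brevity but leaves the mechanism opaque. Both are sound, and the key technical content you identify (preservation of the diagonally dominant $M$-matrix structure under the incomplete Schur updates, with irreducibility guaranteeing strictly positive pivots) is precisely what the cited theorem encapsulates.
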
 \vspace{-7mm}
\begin{proof}
  If the original matrix $A$ is symmetric and irreducibly diagonally
  dominant $M-$matrix, then Theorem \ref{Th:Mprop} tells us that the
  coarse grid operator $A_c$ obeys the same property. Now, $A_c$ being
  an $M-$matrix, Theorem 3.2 in \cite{mei} tells us that $\tilde{A}_c$
  defined above is as stable as the exact Cholesky factorization of $A_c$.
\end{proof}
For a diagonally dominant $M$-matrix, pivoting is rarely needed. However, 
pivoting generally improves the stability of incomplete LU type
factoriations. This is the reason why we use incomplete LU with
pivoting, namely, ILUT function of MATLAB. Moreover, using ILUT will lead 
to a method suitable for unsymmetric matrices that are not necessarily diagonally dominant.
We refer the curious reader to \cite{hig} for a small $2 \times 2$ example where
pivoting would be essential to obtain stable triangular factorization.

For problems with jumping coefficients, the ratio of maximum and minimum entry 
of the coefficient matrix can provide some useful bounds as shown in the
theorem below.
\begin{lemma}[page 7, \cite{kim}]\label{Th:boundCond}
 Let A be a symmetric $N \times N$ matrix with eigenvalues $\lambda_1(A) \leq
\dots \leq \lambda_N(A)$ arranged in nondecreasing order, then the following
holds
\[
\lambda_1(A) \leq min_i\{a_{ii}\} \leq max_i \{a_{ii}\} \leq \lambda_N(A). 
\]
In particular, if $A\succ0$, then cond(A) is bounded below by
$\frac{max_i\{a_{ii}\}}{min_i\{a_{ii}\}}$.
\end{lemma}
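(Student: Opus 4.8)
The plan is to prove the two inequalities separately, each via a Rayleigh-quotient argument applied to suitable test vectors. For the outer inequalities, recall that for a symmetric matrix $A$ the extreme eigenvalues are characterized by
\[
\lambda_1(A) = \min_{x \neq 0} \frac{x^{T}Ax}{x^{T}x}, \qquad \lambda_N(A) = \max_{x \neq 0} \frac{x^{T}Ax}{x^{T}x}.
\]
First I would take $x = e_i$, the $i$th standard basis vector, which gives $e_i^{T}Ae_i / e_i^{T}e_i = a_{ii}$. Plugging this into the variational formulas immediately yields $\lambda_1(A) \le a_{ii} \le \lambda_N(A)$ for every index $i$, and since this holds for all $i$ we may take the minimum (resp.\ maximum) over $i$ on the inner terms to obtain $\lambda_1(A) \le \min_i\{a_{ii}\}$ and $\max_i\{a_{ii}\} \le \lambda_N(A)$. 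The middle inequality $\min_i\{a_{ii}\} \le \max_i\{a_{ii}\}$ is trivial. This establishes the displayed chain.

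For the ``in particular'' claim, suppose $A \succ 0$, so that all eigenvalues are positive and $\operatorname{cond}(A) = \lambda_N(A)/\lambda_1(A)$ in the spectral norm. From the chain just proved we have $\lambda_N(A) \ge \max_i\{a_{ii}\} > 0$ and $0 < \lambda_1(A) \le \min_i\{a_{ii}\}$; dividing the first inequality by the second (both sides positive) gives
\[
\operatorname{cond}(A) = \frac{\lambda_N(A)}{\lambda_1(A)} \ge \frac{\max_i\{a_{ii}\}}{\min_i\{a_{ii}\}},
\]
which is exactly the asserted lower bound. I would note in passing that $\min_i\{a_{ii}\} > 0$ automatically for a positive definite matrix, so the ratio on the right is well defined.

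I do not anticipate a genuine obstacle here: the whole statement is a direct consequence of the Courant--Fischer / Rayleigh-quotient characterization of extreme eigenvalues together with the observation that diagonal entries are Rayleigh quotients at coordinate vectors. The only point requiring a word of care is the sign bookkeeping in the last step — one must verify positivity of $\lambda_1(A)$ (hence of $\min_i\{a_{ii}\}$) before dividing, which is guaranteed by the hypothesis $A \succ 0$. Everything else is routine.
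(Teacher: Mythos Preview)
Your proposal is correct and follows essentially the same approach as the paper: the paper's proof also invokes the variational characterization of the extreme eigenvalues and substitutes the standard basis vectors $e_i$ to obtain the diagonal entries as Rayleigh quotients. You have in fact written out more detail than the paper does, including the explicit division argument for the condition-number bound, which the paper leaves implicit.
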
\vspace{-7mm}
\begin{proof}
The proof follows by writing the following expression
\[
\lambda_1(A) = max_{\| x \| = 1}\{ x^{T}Ax \}, \quad  \lambda_n(A) = max_{\| x
\| = 1}\{ x^{T}Ax \},
\] 
and by setting $x$ as the $i$th column of the identity matrix $I$.
\end{proof}
In Table~\ref{T:cond_est}, we check our estimates on the problems considered
in Section~\ref{S:numExp}. We compare the estimate obtained above to the
that given by the "condest`` function of MATLAB. We find that the estimates
obtained using Theorem~\ref{Th:boundCond} are not close, nevertheless, they do
indicate the increase in the order of magnitude of the condition number with
increasing jumps. When a given coefficient matrix
is SPD, we may indeed use this as a heurestic in determining the quality of the
inexact factorization, see last column of Table~\ref{T:cond_est}. Let $l_{ij}$ denote the 
$(i,j)th$ entry of the lower triangular matrix $L$. For a 
given SPD matrix, it is easily verified that the condition number estimate for the 
incomplete factorization $\tilde{L}\tilde{L}^T$ is given by $\frac{\|L(N,:)\|_2^{2}}{l_{11}^2}$. 
For the unsymmetric matrix, the estimate is given by 
$\frac{max_{i,j}\{ \sum_{i,j \leq i} \tilde{l}_{ij}\tilde{l}_{ji}\}}{ min_{i,j}\{ \sum_{i,j \leq i} \tilde{l}_{ij}\tilde{l}_{ji}\}}$.
These estimates are cheap to compute and can be used as a fault-tolerant mechanism while 
using inexact factorization, for instance, during inexact coarse grid solve as 
implemented in this paper.

\begin{table}
  \caption{ Comparison matlab condest function with the estimate in Lemma~
\ref{Th:boundCond} for exact and inexact factorization for JUMP3D problems
defined in Section~\ref{S:numExp}. Here $\tilde{a}_{ii}$ is the $(i,i)$th
diagonal entry of the inexact coarse grid operator $\tilde{A}_c$ factorization with drop tolerance of
$10^{-4}$}. 
  \label{T:cond_est}
  \begin{center}
    \begin{tabular}{lllllll}
      \toprule
\addlinespace
    $h$  &
\multicolumn{2}{c}{condest}
&\multicolumn{2}{c}{$\frac{max_i\{a_{ii}\}}{min_i\{a_{ii }\}}$} &  
\multicolumn{2}{c}{$\frac{max_i\{\tilde{a}_{ii}\}}{min_i\{\tilde{a}_{ii }\}}$} 
\\
\addlinespace
      \cmidrule(lr){2-3} \cmidrule(lr){4-5} \cmidrule(lr){6-7} 
  & $\kappa = 10^{3}$ & $\kappa = 10^{5}$ & $\kappa = 10^{3}$ & $\kappa =
10^{5}$ & $\kappa = 10^{3}$ & $\kappa = 10^{5}$\\
\addlinespace
      \midrule
\addlinespace
      $1/20$       & $1.1 \times 10^{6}$   & $7.8 \times 10^6$ &  $7.5 \times
10^3$ & $7.5 \times 10^5$   & $9.3\times 10^3$ &   $9.3\times 10^5$       \\
      $1/30$       &  $3.7 \times 10^6$  & $7.7 \times 10^9$  &  $9.0 \times
10^3$ & $9.0 \times 10^5$ & $1.0 \times 10^4$ & $1.0 \times 10^6$ \\
      $1/40$       &  $6.4 \times 10^6$  & $4.7 \times 10^9$  &  $9.0 \times
10^3$ & $9.0 \times 10^5$  & $1.0 \times 10^4$ &  $1.0 \times 10^6$      \\
\addlinespace
      \bottomrule
    \end{tabular}
  \end{center}
\end{table}
In \cite{not2007}, convergence analysis of perturbed two-grid and multigrid method
was done. In the context of domain decomposition methods, in \cite{bra1998}, 
numerical and theoretical analysis suggests the advantages of using inexact solves. 
However, a systematic study of the scalability of inexact coarse grid solve has 
been missing.

\section{Numerical experiments} \label{S:numExp}
All the numerical experiments were performed in MATLAB with double
precision accuracy on Intel core i7 (720QM) with 6 GB RAM. For
comparison, we use the aggregation based AMG (AGMG) software available
at \cite{not}. The AGMG software is a Fortran mex file, on the other hand, the
AMG method introduced in this paper, namely, GMG, is written
completely in MATLAB.  For GMG, the iterative accelerator used is
GMRES available at \cite{saad_soft}, the code was changed such that the 
stopping is based on the decrease of the 2-norm of the relative residual. For
AGMG, GCR method is used \cite{not10}. For both
GMRES and GCR, the maximum number of iterations allowed is 600, and no
restart is done. The stopping criteria is the decrease of the relative
residual below $10^{-7}$, i.e., when
\[
\frac{\|b-Ax_k\|}{\|b\|} < 10^{-7}.
\]
Here $b$ is the right hand side and $x_k$ is an approximation to the
solution at the $k$th step.

\begin{table}
  \caption{Notations used in tables of numerical experiments }
  \label{not}
  \begin{center}
    \begin{tabular}{ll}
      \toprule
\addlinespace
      Notations & Meaning                                                           \\
\addlinespace
      \midrule
\addlinespace
      $h$       & Discretization step                                   \\
      $N$        & Size of the original matrix                                \\
      $N_c$        & Size of the coarse grid matrix                                \\ 
      its       & Iteration count                                                   \\
      time      & Total CPU time (setup plus solve) in seconds       \\
      $cf$        & coarsening factor \\
      ME        & Memory allocation problem                                     
              \\
      F1       & AGMG returned flag 1, see \cite{not} \\
      SPD        & Symmetric positive definite \\
      NA        & Not applicable                                                    \\
      NC        & Did not converged                                                     \\
      -         & Data not available                      \\
      GMG-NO  & Graph based matching for AMG, smoother has ND ordering         \\
      GMG-ND   &  Graph based matching for AMG, smoother has natural ordering  \\
      EGMG-ND       &   Graph based matching for AMG, exact coarse grid, smoother has ND ordering                                          \\
      AGMG         & Classical AMG, see \cite{not} \\
     $cf$     & Coarsening factor, $cf = n_c/n$ for $n$ and $n_c$ no. of discrete points \\ 
              & for uniform fine and coarse grid respectively. \\
\addlinespace
      \bottomrule
    \end{tabular}
  \end{center}
\end{table}

\subsection{Test cases}
\begin{description}
\item {\bf Convection-Diffusion:} Our primary test case is the
convection-diffusion Equation \eqref{pde} defined on page
\pageref{pde}. We use the notation DC to indicate that the problems are discontinous. We consider a test case as follows
\begin{description}
\item {\bf DC1, 2D case:} The tensor $\kappa$ is isotropic and
  discontinuous. The domain contains many zones of high permeability
  that are isolated from each other. Let $[x]$ denote the integer
  value of $x$. For two-dimensional case, we define $\kappa(x)$ as
  follows:
  \begin{eqnarray*}
    \kappa(x)=\left\{
      \begin{array}{ll}
        10^3\ast ([10\ast x_2]+1),~ \text{if} \hspace{0.2cm}[10\ast x_i]\equiv0
        \hspace{0.12cm}~(mod~2),~i=1, 2,\\
        1,~ \text{otherwise}.
      \end{array}
    \right.
  \end{eqnarray*}
  The velocity field $\mathbf{a}$ is kept zero.
  We consider a $n \times n$ uniform grid where $n$ is the number of discrete points along each spatial directions.
  \item {\bf DC1, 3D case:} For three-dimensional case, $\kappa(x)$ is defined as follows:
  \begin{eqnarray*}
    \kappa(x)=\left\{
      \begin{array}{ll}
        10^3\ast ([10\ast x_2]+1),\hspace{0.3cm} \text{if} \hspace{0.2cm}[10\ast x_i]\equiv 0
        \hspace{0.12cm} (mod~2) \hspace{0.12cm}, \hspace{0.12cm} i=1, 2, 3,\\
        1,~\text{otherwise}.
    \end{array}
  \right.
\end{eqnarray*}
  Here again, the velocity field $\mathbf{a}$ is kept zero. We consider a $n
\times n \times n$ uniform grid. The jump in the diagonal entries of the coefficient matrix 
is shown in Figure (\ref{F:nonZeros}).
\item {\bf DCC1, 2D case:} Same as DC1, 2D case above, except that the velocity is non-zero and it is given as
$a(x)=(1000,1000)$.
\item {\bf DCC2, 3D case:} Same as DC1, 3D case above, except that the velocity 
$a(x)=(1000,1000,1000)$.
\end{description}
\item {\bf Florida matrix market collection:} The list of Florida
  matrix matrices are shown in Table (\ref{Tab:FMM}). As we observe,
  all the problems are symmetric positive definite steaming from wide
  range of applications. For more on the properties of these matries,
  the reader is referred to \cite{tim}.
  \begin{table}
    \caption{\label{Tab:FMM} Forida matrix market matrices. Here SPD stands for 
      symmetric positive definite.}
    \begin{center}
      \begin{tabular}{lllll}
        \toprule
\addlinespace
        \textbf{Matrices} & \textbf{Kind}                  & \textbf{SPD} & \textbf{size} & \textbf{non-zeros} \\
\addlinespace        
\midrule
\addlinespace
        gyro\_m           & Model reduction problem        & Yes          & 17361         & 340K               \\
        bodyy4            & Structural problem             & Yes          & 17546         & 121K               \\
        nd6k              & 2D/3D problem                  & Yes          & 18000         & 6.8M              \\
        bodyy5            & Structural problem             & Yes          & 18589         & 128K               \\
        wathen100         & Random 2D/3D problem           & Yes          & 30401         & 471K               \\
        wathen120         & Random 2D/3D problem           & Yes          & 36441         & 565K               \\
        torsion1          & Duplicate optimization problem & Yes          & 40000         & 197K               \\
        obstclae          & Optimization problem           & Yes          & 40000         & 197K               \\
        jnlbrng1          & Optimization problem           & Yes          & 40000         & 199K              \\
        minsurfo          & Optimization problem           & Yes          & 40806         & 203K              \\
        gridgena          & Optimization problem           & Yes          & 48962         & 512K               \\
        crankseg\_1       & Structural problem             & Yes          & 52804         & 10M                \\
        qa8fk             & Acoustic problem               & Yes          & 66127         & 1M                 \\
        cfd1              & Computational fluid dynamics   & Yes          & 70656         & 1.8M               \\
        finan512          & Economic problem               & Yes          & 74752         & 596K               \\
        shallow\_water1   & Computational fluid dynamics   & Yes          & 81920         & 327K               \\
        2cubes\_sphere    & Electromagnetic problem        & Yes          & 101492        & 1.6M               \\
        Thermal\_TC       & Thermal problem                & Yes          & 102158        & 711K               \\
        Thermal\_TK       & Thermal problem                & Yes          & 102158        & 711K               \\
        G2\_circuit       & circuit simulation             & Yes          & 150102        & 726K               \\
        bcsstk18          & structural problem             & Yes          & 11948        & 149K               \\
        cbuckle           & structural problem             & Yes          & 13681        & 676K               \\
        Pres\_Poisson     & Computational fluid dynamics   & Yes          & 14822        & 715K   \\         
\addlinespace
        \bottomrule
      \end{tabular}
    \end{center}
  \end{table}
\end{description}

\subsection{Comments on numerical results}
Two version of GMG are
shown, namely, GMG-NO which stands for GMG where smoother has
natural ordering, and GMG-ND stands for GMG with smoother having
nested dissection ordering. In particular, for GMG-ND, we first
apply the nested dissection reordering and then the smoother is
defined. We observe that after applying nested dissection reordering,
the smoother which is ILU(0) in our case can be computed and applied
in parallel.  Since, in ILU(0), no pivoting is done, parallelizing
ILU(0) after ND ordering leads to a parallel smoother. Certainly, not
much parallelism is expected when the smoother is applied with natural
ordering of unknowns. As mentioned before, for the coarse grid solve, we use
ILU($10^{-4}$) to solve it inexactly. We do this inexact solve to see
the effect of inexact solve in the iteration count and time. For AGMG, Gauss-Seidel smoothing is
used, and the choice of the coarse grid is based on the strength of
connection between nodes. Moreover, in AGMG, usual multilevel recursive approach is
followed, i.e., going down the grid heirarchy untill the coarse grid is small enough (or it stops 
when it does not satisfy certain criteria) to be 
solved exactly.

We recall here that our aim is to compare the classical
multi-grid approach implemented in AGMG with the two grid approach of
GMG with the following ingredients
\vspace{-5mm}
\begin{packed_item}
\item Coarse grid based on graph matching (call to METIS)
\item ILU(0) is the smoother (built in MATLAB)
\item Coarse grid equation is solved inexactly (using built in
  ILU($t$) routine in MATLAB)
\end{packed_item}
\vspace{-5mm}

In Tables (\ref{T:large_cf_2.5}), (\ref{T:large_cf_3}), (\ref{T:large_cf_3.5}),
(\ref{T:large_cf_4}), (\ref{T:large_cf_4.5}), (\ref{T:large_cf_5}),
(\ref{T:large_cf_5.5}),
 (\ref{T:large_cf_6}), (\ref{T:large_cf_7}), and (\ref{T:large_cf_8}), we have 
shown the iteration count and the total CPU time: setup time plus solve time,  
for values of $cf$ ranging from 3 to 8. For the values of $cf$ lying between 
3 and 8, we can locate the value of $cf$ for which the CPU time is observed to 
be lowest, to do so, we had to vary $cf$ so that we do not miss a value of 
$cf$ for which the CPU time could be lowest. For 2D problems, we find that 
the AGMG method is several times faster than the GMG based methods. However, for 
3D problems, the AGMG method does not converge at all. In contrast, GMG methods 
converges and shows mesh independent convergence rates for all values of $cf$. 
Considering 2D case first: the least CPU time for GMG-NO method is observed for $cf=3$. 
For GMG-ND method, the least CPU time is observed for $cf=2.5$ except for 2D $1200 \times 1200$ problem for which the least 
CPU time occurs for $cf=3$. On the other hand, for EGMG-NO method, the least CPU time was observed 
when we had $cf=4$ for $800 \times 800$ and $1200 \times 1200$ problem, and the least CPU time 
for $1200 \times 1200$ problem was obtained when $cf$ was equal to 6.
For a smaller size 3D problem of $70 \times 70
\times 70$, the iteration count rather increases rapidly with the increasing value of $cf$. For 
smaller problems, it seems that a finer coarse grid is necessary. The reason
for high iteration count may be due to the fact that for a given partial
differential equation, the coefficient matrix becomes smoother 
as the resolution of the mesh is increased.
In Figures \ref{F:curvePres} and \ref{F:curveJUMP}, 
we find that the convergence curve for the respective exact and inexact 
methods are very similar, this also suggests a similar spectrum and 
probably a similar condition number. To find how close the 
approximated coarse matrix is to the exact coarse operator, in Table \ref{T:norm}, 
we compare the relative error $\|LU-\tilde{L}\tilde{U}\|/\|LU\|$ for both natural and ND ordering. 
We find that the relative error is quite small, this is the reason why 
the direct and indirect versions converges in similar number of iterations. 
But since the inexact methods are relatively fast to build and apply,
we save significant number of CPU time and storage requirement, see Figure 
(\ref{F:nonZeros}) where an inexact solve needs about 10 times less storage compared to 
the exact solver.

In Tables (\ref{T:large_con_cf_2.5}), (\ref{T:large_con_cf_3}),
(\ref{T:large_con_cf_3.5}), (\ref{T:large_con_cf_4}),
(\ref{T:large_con_cf_4.5}),
(\ref{T:large_con_cf_5}), (\ref{T:large_con_cf_5.5}),
 (\ref{T:large_con_cf_6}), and (\ref{T:large_con_cf_7}), we have 
similar plots for the test case DCC1. For this problem, we find that the AGMG
method converges much faster 
compared to the GMG methods for both 2D and 3D problems, exception being the
$120 \times 120 \times 120$ problem where the method fails to converge. However,
we remind ourselves that GMG methods are implemented in Matlab, and thus 
they are expected to be faster when they are implemented in lower level
languages such as Fortran or C. Thus, our prediction is that even for these
problems where GMG shows larger CPU time, an implementation in Fortran may
have the convergence time comparable with that of AGMG. Notably, for this test
case, the iteration count decreases even more rapidly (compared to test case
DC1) with the increase in the size of the problem.

The rule of thumb in the choice of coarse grid size is to increase the $cf$
value proportionally with the increasing size of the problem. For a smaller 
size problem with discontinous coefficients such as DC1 and DCC1, it is good to 
keep the $cf$ value small. The choice of drop tolerance in ILUT to be $10^{-6}$ 
worked well in practice and we did not encounter any breakdown. However, in
case when the jumps are large, the fault tolerant mechanism such as the one
discussed in the analysis section can be used.

Finally, in Table (\ref{table1}), we show some experiments with the Florida
matrix market problems. We fixed the coarse grid size to be 4096. In general,
for most of the problems, we find that the two-grid method
is faster compared to AGMG, exception being, torsion1, obstclae,
jnlbrng1, minsurfo, qa8fk, and shallow\_water, where AGMG is about five times
faster. For rest of the problems, GMG methods shows more robustness compared
to AGMG. Comparing GMG-NO to GMG-ND, we find that GMG-NO converges faster
with few exceptions. In Table (\ref{T:table_florida2}), we present the numerical
results with $cf=2.5$. A detailed investigation of the best coarse grid size
for these problems deserves more effort and detailed study. 
%
\begin{table}
  \caption{Numerical results for DC1 problem with $cf=2.5$ using GMRES(30).}
  \label{T:large_cf_2.5}
  \begin{center}
    \begin{tabular}{llllllllll}
     \toprule
\addlinespace
      matrix & $h$ & \multicolumn{2}{c}{GMG-ND} & \multicolumn{2}{c}{GMG-NO}
& \multicolumn{2}{c}{EGMG-NO} & \multicolumn{2}{c}{AGMG} \\
\addlinespace
      \cmidrule(lr){3-4} \cmidrule(lr){5-6} \cmidrule(lr){7-8}
\cmidrule(lr){9-10}
\addlinespace
             &       & its                          & time                      
  & its & time & its & time & its & time  \\
\addlinespace
\midrule
\addlinespace

                & 1/800 & 23 & 25.5  & 17 & 17.4 & 16 & 38.7 & 27 & 6.3   \\
         2D & 1/1000 & 23 & 45.5  & 18 & 30.3 & 18 & 82.0 & 37 & 14.1  \\
                & 1/1200 & 24 & 70.8 & 20 & 48.2 & 18 & 159.5 & 37 & 18.2 \\
&  & &  &  &  \\
\addlinespace

                & 1/70 & 145 & 65.8  & 21 & 21.0  & 14 & 163.0 &  NC & NA \\
         3D & 1/100 & 191 & 306.5 & 26 & 141.0  & ME & NA &  NC & NA
\\
                & 1/120 & 147 & 598.0   & 45 & 379.2 & ME & NA &  NC & NA
\\
\addlinespace
      \bottomrule
    \end{tabular}
  \end{center}
\end{table}
%
%
%
\begin{table}
  \caption{Numerical results for DC1 problem with $cf=3$ using GMRES(30).}
  \label{T:large_cf_3}
  \begin{center}
    \begin{tabular}{llllllllll}
     \toprule
\addlinespace
      matrix & $h$ & \multicolumn{2}{c}{GMG-ND} & \multicolumn{2}{c}{GMG-NO}
& \multicolumn{2}{c}{EGMG-NO} & \multicolumn{2}{c}{AGMG} \\
\addlinespace
      \cmidrule(lr){3-4} \cmidrule(lr){5-6} \cmidrule(lr){7-8}
\cmidrule(lr){9-10}
\addlinespace
             &       & its                          & time                      
  & its & time & its & time & its & time  \\
\addlinespace
\midrule
\addlinespace

                & 1/800 & 28 & 24.7  & 20 & 16.8 & 19 & 22.8 & 27 & 6.3   \\
         2D & 1/1000 & 29 & 50.4  & 19 & 25.6 & 19 & 25.5 & 37 & 14.1  \\
                & 1/1200 & 26 & 65.8 & 19 & 37.6 & 20 & 70.8 & 37 & 18.2 \\
&  & &  &  &  \\
\addlinespace

                & 1/70 & 165 & 66.0  & 20 & 14.3  & 15 & 47.5 &  NC & NA \\
         3D & 1/100 & 191 & 314.1 & 25 & 94.2  & 16 & 839.2 &  NC & NA
\\
                & 1/120 & 165 & 417.0   & 35 & 236.8 & ME & ME &  NC & NA
\\
\addlinespace
      \bottomrule
    \end{tabular}
  \end{center}
\end{table}
%
%
\begin{table}
  \caption{Numerical results for DC1 problem with $cf=3.5$ using GMRES(30).}
  \label{T:large_cf_3.5}
  \begin{center}
    \begin{tabular}{llllllllll}
     \toprule
\addlinespace
      matrix & $h$ & \multicolumn{2}{c}{GMG-ND} & \multicolumn{2}{c}{GMG-NO}
& \multicolumn{2}{c}{EGMG-NO} & \multicolumn{2}{c}{AGMG} \\
\addlinespace
      \cmidrule(lr){3-4} \cmidrule(lr){5-6} \cmidrule(lr){7-8}
\cmidrule(lr){9-10}
\addlinespace
             &       & its                          & time                      
  & its & time & its & time & its & time  \\
\addlinespace
\midrule
\addlinespace

                & 1/800 & 33 & 30.6  & 28 & 22.7 & 22 & 19.9 & 27 & 6.3   \\
         2D     & 1/1000 & 36 & 50.9  & 23 & 28.6 & 22 & 33.8 & 37 & 14.1  \\
                & 1/1200 & 34 & 72.7 & 23 & 42.2 & 22 & 54.8 & 37 & 18.2 \\
&  & &  &  &  \\
\addlinespace

                & 1/70 & 153 & 55.7  & 23 & 11.6  & 17 & 21.5 &  NC & NA \\
         3D     & 1/100 & 200 & 221.8 & 32 & 58.6  & 23 & 254.8 &  NC & NA
\\
                & 1/120 & 153 & 339.5   & 33 & 119.5 & 19 & 740.2 &  NC & NA
\\
\addlinespace
      \bottomrule
    \end{tabular}
  \end{center}
\end{table}
%
%
\begin{table}
  \caption{Numerical results for DC1 problem with $cf=4$ using GMRES(30).}
  \label{T:large_cf_4}
  \begin{center}
    \begin{tabular}{llllllllll}
     \toprule
\addlinespace
      matrix & $h$ & \multicolumn{2}{c}{GMG-ND} & \multicolumn{2}{c}{GMG-NO}
& \multicolumn{2}{c}{EGMG-NO} & \multicolumn{2}{c}{AGMG} \\
\addlinespace
      \cmidrule(lr){3-4} \cmidrule(lr){5-6} \cmidrule(lr){7-8}
\cmidrule(lr){9-10}
\addlinespace
             &       & its                          & time                      
  & its & time & its & time & its & time  \\
\addlinespace
\midrule
\addlinespace

                & 1/800 & 44 & 34.8  & 26 & 20.6 & 24 & 19.7 & 27 & 6.3   \\
         2D & 1/1000 & 37 & 49.8  & 25 & 30.1 & 25 & 33.2 & 37 & 14.1  \\
                & 1/1200 & 37 & 72.3 & 28 & 51.1 & 27 & 53.8 & 37 & 18.2 \\
&  & &  &  &  \\
\addlinespace

                & 1/70 & 164 & 57.6  & 84 & 29.5  & 55 & 26.3 &  NC & NA \\
         3D & 1/100 & 212 & 235.8 & 27 & 44.5  & 19 & 116.3 &  NC & NA
\\
                & 1/120 & 168 & 346.2   & 31 & 94.8 & 21 & 326.3 &  NC & NA
\\
\addlinespace
      \bottomrule
    \end{tabular}
  \end{center}
\end{table}
%
%
\begin{table}
  \caption{Numerical results for DC1 problem with $cf=4.5$ using GMRES(30).}
  \label{T:large_cf_4.5}
  \begin{center}
    \begin{tabular}{llllllllll}
     \toprule
\addlinespace
      matrix & $h$ & \multicolumn{2}{c}{GMG-ND} & \multicolumn{2}{c}{GMG-NO}
& \multicolumn{2}{c}{EGMG-NO} & \multicolumn{2}{c}{AGMG} \\
\addlinespace
      \cmidrule(lr){3-4} \cmidrule(lr){5-6} \cmidrule(lr){7-8}
\cmidrule(lr){9-10}
\addlinespace
             &       & its                          & time                      
  & its & time & its & time & its & time  \\
\addlinespace
\midrule
\addlinespace

                & 1/800 & 43 & 32.7  & 27 & 20.5 & 27 & 21.1 & 27 & 6.3   \\
         2D & 1/1000 & 44 & 53.9  & 28 & 33.9 & 27 & 33.4 & 37 & 14.1  \\
                & 1/1200 & 41 & 74.0 & 27 & 47.5 & 29 & 55.6 & 37 & 18.2 \\
&  & &  &  &  \\
\addlinespace

                & 1/70 & 170 & 58.7  & 89 & 31.0  & 70 & 26.6 &  NC & NA \\
         3D & 1/100 & 205 & 215.4 & 28 & 40.9  & 22 & 63.3 &  NC & NA
\\
                & 1/120 & 157 & 302.3   & 34 & 83.5 & 24 & 165.0 &  NC & NA
\\
\addlinespace
      \bottomrule
    \end{tabular}
  \end{center}
\end{table}
%
%
\begin{table}
  \caption{Numerical results for DC1 problem with $cf=5$ using GMRES(30).}
  \label{T:large_cf_5}
  \begin{center}
    \begin{tabular}{llllllllll}
     \toprule
\addlinespace
      matrix & $h$ & \multicolumn{2}{c}{GMG-ND} & \multicolumn{2}{c}{GMG-NO}
& \multicolumn{2}{c}{EGMG-NO} & \multicolumn{2}{c}{AGMG} \\
\addlinespace
      \cmidrule(lr){3-4} \cmidrule(lr){5-6} \cmidrule(lr){7-8}
\cmidrule(lr){9-10}
\addlinespace
             &       & its                          & time                      
  & its & time & its & time & its & time  \\
\addlinespace
\midrule
\addlinespace

                & 1/800 & 46 & 34.4  & 32 & 24.4 & 30 & 24.2 & 27 & 6.3   \\
         2D & 1/1000 & 47 & 56.8  & 32 & 37.9 & 34 & 39.2 & 37 & 14.1  \\
                & 1/1200 & 51 & 86.8 & 30 & 53.8 & 29 & 53.6 & 37 & 18.2 \\
&  & &  &  &  \\
\addlinespace

                & 1/70 & 284 & 94.2  & 199 & 63.3  & 193 & 62.9 &  NC & NA \\
         3D & 1/100 & 206 & 280.9 & 30 & 55.2  & 23 & 60.5 &  NC & NA
\\
                & 1/120 & 168 & 335.3   & 40 & 92.2 & 26 & 122.7 & NC & NA
\\
\addlinespace
      \bottomrule
    \end{tabular}
  \end{center}
\end{table}
%
%
\begin{table}
  \caption{Numerical results for DC1 problem with $cf=5.5$ using GMRES(30).}
  \label{T:large_cf_5.5}
  \begin{center}
    \begin{tabular}{llllllllll}
     \toprule
\addlinespace
      matrix & $h$ & \multicolumn{2}{c}{GMG-ND} & \multicolumn{2}{c}{GMG-NO}
& \multicolumn{2}{c}{EGMG-NO} & \multicolumn{2}{c}{AGMG} \\
\addlinespace
      \cmidrule(lr){3-4} \cmidrule(lr){5-6} \cmidrule(lr){7-8}
\cmidrule(lr){9-10}
\addlinespace
             &       & its                          & time                      
  & its & time & its & time & its & time  \\
\addlinespace
\midrule
\addlinespace

                & 1/800 & 51 & 37.0  & 33 & 23.5 & 33 & 23.9 & 27 & 6.3   \\
         2D & 1/1000 & 52 & 61.3  & 34 & 37.6 & 33 & 37.8 & 37 & 14.1  \\
                & 1/1200 & 50 & 84.2 & 33 & 54.3 & 32 & 55.3 & 37 & 18.2 \\
&  & &  &  &  \\
\addlinespace

                & 1/70 & 469 & 148.1  & 162 & 50.9  & 160 & 51.5 &  NC & NA \\
         3D & 1/100 & 219 & 222.9 & 148 & 146.2  & 109 & 117.0 &  NC & NA
\\
                & 1/120 & 221 & 393.7   & 42 & 83.4 & 28 & 95.4 & NC & NA
\\
\addlinespace
      \bottomrule
    \end{tabular}
  \end{center}
\end{table}
%
%
\begin{table}
  \caption{Numerical results for DC1 problem with $cf=6$ using GMRES(30).}
  \label{T:large_cf_6}
  \begin{center}
    \begin{tabular}{llllllllll}
     \toprule
\addlinespace
      matrix & $h$ & \multicolumn{2}{c}{GMG-ND} & \multicolumn{2}{c}{GMG-NO}
& \multicolumn{2}{c}{EGMG-NO} & \multicolumn{2}{c}{AGMG} \\
\addlinespace
      \cmidrule(lr){3-4} \cmidrule(lr){5-6} \cmidrule(lr){7-8}
\cmidrule(lr){9-10}
\addlinespace
             &       & its                          & time                      
  & its & time & its & time & its & time  \\
\addlinespace
\midrule
\addlinespace

                & 1/800 & 55 & 41.4  & 36 & 25.2 & 36 & 25.2 & 27 & 6.3   \\
         2D & 1/1000 & 56 & 65.8  & 36 & 40.0 & 36 & 40.0 & 37 & 14.1  \\
                & 1/1200 & 55 & 92.9 & 36 & 56.1 & 36 & 56.8 & 37 & 18.2 \\
&  & &  &  &  \\
\addlinespace

                & 1/70 & 568 & 183.2  & 316 & 98.9  & 354 & 115.9 &  NC & NA
\\
         3D & 1/100 & 208 & 213.5 & 103 & 100.4  & 94 & 101.1 &  NC & NA
\\
                & 1/120 & 167 & 308.1   & 43 & 83.7 & 41 & 95.5 &  NC & NA
\\
\addlinespace
      \bottomrule
    \end{tabular}
  \end{center}
\end{table}
%
%
\begin{table}
  \caption{Numerical results for DC1 for 2D and 3D problems
with $cf=7$ using GMRES(30)}
  \label{T:large_cf_7}
  \begin{center}
    \begin{tabular}{llllllllll}
     \toprule
\addlinespace
      matrix & $h$ & \multicolumn{2}{c}{GMG-ND} & \multicolumn{2}{c}{GMG-NO}
& \multicolumn{2}{c}{EGMG-NO} & \multicolumn{2}{c}{AGMG} \\
\addlinespace
      \cmidrule(lr){3-4} \cmidrule(lr){5-6} \cmidrule(lr){7-8}
\cmidrule(lr){9-10}
\addlinespace
             &       & its                          & time                      
  & its & time & its & time & its & time  \\
\addlinespace
\midrule
\addlinespace

                & 1/800 & 68 & 41.9  & 51 & 25.8 & 41 & 27.2 & 27 & 6.3   \\
         2D & 1/1000 & 68 & 73.5  & 41 & 36.9 & 41 & 40.9 & 37 & 14.1  \\
                & 1/1200 & 73 & 112.0 & 41 & 59.4 & 41 & 60.0 & 37 & 18.2 \\
&  & &  &  &  \\
\addlinespace

                & 1/70 & NC & NA  & 462 & 140.9  & 277 & 86.6 &  NC &
NA \\
         3D & 1/100 & 566 & 542.2 & 266 & 249.9  & 255 & 239.0 &  NC &
NA
\\
                & 1/120 & 173 & 314.5  & 72 & 127.5 & 57 & 114.5 &  NC & NA
\\
\addlinespace
      \bottomrule
    \end{tabular}
  \end{center}
\end{table}
%
%
\begin{table}
  \caption{Numerical results for DC1 for 2D and 3D problems
with $cf=8$ using GMRES(30)}
  \label{T:large_cf_8}
  \begin{center}
    \begin{tabular}{llllllllll}
     \toprule
\addlinespace
      matrix & $h$ & \multicolumn{2}{c}{GMG-ND} & \multicolumn{2}{c}{GMG-NO}
& \multicolumn{2}{c}{EGMG-NO} & \multicolumn{2}{c}{AGMG} \\
\addlinespace
      \cmidrule(lr){3-4} \cmidrule(lr){5-6} \cmidrule(lr){7-8}
\cmidrule(lr){9-10}
\addlinespace
             &       & its                          & time                      
  & its & time & its & time & its & time  \\
\addlinespace
\midrule
\addlinespace

                & 1/800 & 77 & 48.3  & 48 & 30.9 & 48 & 30.9 & 27 & 6.3   \\
         2D & 1/1000 & 76 & 74.6  & 46 & 46.8 & 46 & 47.2 & 37 & 14.1  \\
                & 1/1200 & 77 & 108.6 & 46 & 67.0 & 45 & 65.9 & 37 & 18.2 \\
&  & &  &  &  \\
\addlinespace

                & 1/70 & NC & NA  & 360 & 120.2  & 381 & 125.1 &  NC &
NA \\
         3D & 1/100 & NC & NA & 454 & 441.4  & 440 & 426.5 &  NC &
NA
\\
                & 1/120 & 486 & 831.2  & 186 & 319.5 & 187 & 326.2 &  NC & NA
\\
\addlinespace
      \bottomrule
    \end{tabular}
  \end{center}
\end{table}
%
%
\begin{table}
  \caption{Numerical results for DCC1 problem with $cf=2.5$ using
GMRES(30).}
  \label{T:large_con_cf_2.5}
  \begin{center}
    \begin{tabular}{llllllllll}
     \toprule
\addlinespace
      matrix & $h$ & \multicolumn{2}{c}{GMG-ND} & \multicolumn{2}{c}{GMG-NO}
& \multicolumn{2}{c}{EGMG-NO} & \multicolumn{2}{c}{AGMG} \\
\addlinespace
      \cmidrule(lr){3-4} \cmidrule(lr){5-6} \cmidrule(lr){7-8}
\cmidrule(lr){9-10}
\addlinespace
             &       & its                          & time                      
  & its & time & its & time & its & time  \\
\addlinespace
\midrule
\addlinespace

                & 1/800 & 54 & 45.3  & 49 & 36.6 & 18 & 14.2 & 27 & 6.3   \\
         2D & 1/1000 & 75 & 90.4  & 60 & 73.7 & 19 & 83.0 & 37 & 14.1  \\
                & 1/1200 & 51 & 101.4 & 40 & 61.1 & 21 & 165.6 & 37 & 18.2 \\
&  & &  &  &  \\
\addlinespace

                & 1/70 & 100 & 50.3  & 14 & 17.4  & 14 & 161.6 &  16 & 3.4 \\
         3D & 1/100 & 139 & 223.8 & 15 & 100.8  & ME & NA &  15 & 8.8
\\
                & 1/120 & 123 & 471.3   & 15 & 264.2 & ME & NA &  NC & NA
\\
\addlinespace
      \bottomrule
    \end{tabular}
  \end{center}
\end{table}
%
%
\begin{table}
  \caption{Numerical results for DCC1 problem with $cf=3$ using
GMRES(30).}
  \label{T:large_con_cf_3}
  \begin{center}
    \begin{tabular}{llllllllll}
     \toprule
\addlinespace
      matrix & $h$ & \multicolumn{2}{c}{GMG-ND} & \multicolumn{2}{c}{GMG-NO}
& \multicolumn{2}{c}{EGMG-NO} & \multicolumn{2}{c}{AGMG} \\
\addlinespace
      \cmidrule(lr){3-4} \cmidrule(lr){5-6} \cmidrule(lr){7-8}
\cmidrule(lr){9-10}
\addlinespace
             &       & its                          & time                      
  & its & time & its & time & its & time  \\
\addlinespace
\midrule
\addlinespace

                & 1/800 & 28 & 24.7  & 20 & 16.8 & 19 & 22.8 & 27 & 6.3   \\
         2D & 1/1000 & 29 & 50.4  & 19 & 25.6 & 19 & 25.5 & 37 & 14.1  \\
                & 1/1200 & 26 & 65.8 & 19 & 37.6 & 20 & 70.8 & 37 & 18.2 \\
&  & &  &  &  \\
\addlinespace

                & 1/70 & 113 & 48.7  & 15 & 10.6  & 15 & 46.9 &  16 & 3.4 \\
         3D & 1/100 & 153 & 207.3 & 17 & 53.2  & 17 & 681.8 &  15 & 8.8
\\
                & 1/120 & 135 & 337.8   & 17 & 119.7 & ME & ME &  NC & NA
\\
\addlinespace
      \bottomrule
    \end{tabular}
  \end{center}
\end{table}
%
%
\begin{table}
  \caption{Numerical results for DCC1 problem with $cf=3.5$ using
GMRES(30).}
  \label{T:large_con_cf_3.5}
  \begin{center}
    \begin{tabular}{llllllllll}
     \toprule
\addlinespace
      matrix & $h$ & \multicolumn{2}{c}{GMG-ND} & \multicolumn{2}{c}{GMG-NO}
& \multicolumn{2}{c}{EGMG-NO} & \multicolumn{2}{c}{AGMG} \\
\addlinespace
      \cmidrule(lr){3-4} \cmidrule(lr){5-6} \cmidrule(lr){7-8}
\cmidrule(lr){9-10}
\addlinespace
             &       & its                          & time                      
  & its & time & its & time & its & time  \\
\addlinespace
\midrule
\addlinespace

                & 1/800 & 47 & 36.5  & 32 & 25.1 & 27 & 24.7 & 27 & 6.3   \\
         2D & 1/1000 & 52 & 64.0  & 41 & 44.4 & 28 & 42.9 & 37 & 14.1  \\
                & 1/1200 & 44 & 80.8 & 30 & 57.8 & 27 & 65.5 & 37 & 18.2 \\
&  & &  &  &  \\
\addlinespace

                & 1/70 & 129 & 48.6  & 16 & 8.7  & 16 & 21.0 &  16 & 3.4 \\
         3D & 1/100 & 171 & 194.2 & 18 & 36.5  & 18 & 248.0 &  15 & 8.8
\\
                & 1/120 & 150 & 325.0   & 18 & 78.3 & 18 & 744.7 &  NC & NA
\\
\addlinespace
      \bottomrule
    \end{tabular}
  \end{center}
\end{table}
%
%
\begin{table}
  \caption{Numerical results for DCC1 for 2D and 3D problems
with $cf=4$ using GMRES(30)}
  \label{T:large_con_cf_4}
  \begin{center}
    \begin{tabular}{llllllllll}
     \toprule
\addlinespace
      matrix & $h$ & \multicolumn{2}{c}{GMG-ND} & \multicolumn{2}{c}{GMG-NO}
& \multicolumn{2}{c}{EGMG-NO} & \multicolumn{2}{c}{AGMG} \\
\addlinespace
      \cmidrule(lr){3-4} \cmidrule(lr){5-6} \cmidrule(lr){7-8}
\cmidrule(lr){9-10}
\addlinespace
             &       & its                          & time                      
  & its & time & its & time & its & time  \\
\addlinespace
\midrule
\addlinespace

                & 1/800 & 57 & 43.8  & 32 & 24.8 & 31 & 26.2 & 27 & 6.3   \\
         J2D & 1/1000 & 53 & 56.4  & 38 & 41.7 & 35 & 43.7 & 37 & 14.1  \\
                & 1/1200 & 52 & 81.3 & 39 & 61.9 & 38 & 66.1 & 37 & 18.2 \\
&  & &  &  &  \\
\addlinespace

                & 1/70 & 138 & 52.0  & 26 & 12.4  & 26 & 15.8 &  16 & 3.4 \\
         3D & 1/100 & 182 & 191.2 & 19 & 31.1  & 19 & 109.9 &  15 & 8.8
\\
                & 1/120 & 170 & 308.8  & 20 & 61.2 & 19 & 312.9 &  NC & NA
\\
\addlinespace
      \bottomrule
    \end{tabular}
  \end{center}
\end{table}

\clearpage
%
%
\begin{table}
  \caption{Numerical results for DCC1 for 2D and 3D problems
with $cf=4.5$ using GMRES(30)}
  \label{T:large_con_cf_4.5}
  \begin{center}
    \begin{tabular}{llllllllll}
     \toprule
\addlinespace
      matrix & $h$ & \multicolumn{2}{c}{GMG-ND} & \multicolumn{2}{c}{GMG-NO}
& \multicolumn{2}{c}{EGMG-NO} & \multicolumn{2}{c}{AGMG} \\
\addlinespace
      \cmidrule(lr){3-4} \cmidrule(lr){5-6} \cmidrule(lr){7-8}
\cmidrule(lr){9-10}
\addlinespace
             &       & its                          & time                      
  & its & time & its & time & its & time  \\
\addlinespace
\midrule
\addlinespace

                & 1/800 & 76 & 53.9  & 39 & 27.6 & 37 & 27.5 & 27 & 6.3   \\
         J2D & 1/1000 & 74 & 82.2  & 39 & 43.9 & 38 & 44.4 & 37 & 14.1  \\
                & 1/1200 & 64 & 112.6 & 39 & 64.1 & 37 & 65.6 & 37 & 18.2 \\
&  & &  &  &  \\
\addlinespace

                & 1/70 & 151 & 53.5  & 38 & 16.1  & 38 & 18.1 &  16 & 3.4 \\
         3D & 1/100 & 210 & 234.6 & 21 & 31.1  & 21 & 66.0 &  15 & 8.8
\\
                & 1/120 & 180 & 363.6  & 22 & 61.5 & 22 & 169.2 &  NC & NA
\\
\addlinespace
      \bottomrule
    \end{tabular}
  \end{center}
\end{table}
%
%
\begin{table}
  \caption{Numerical results for DCC1 for 2D and 3D problems
with $cf=5$ using GMRES(30)}
  \label{T:large_con_cf_5}
  \begin{center}
    \begin{tabular}{llllllllll}
     \toprule
\addlinespace
      matrix & $h$ & \multicolumn{2}{c}{GMG-ND} & \multicolumn{2}{c}{GMG-NO}
& \multicolumn{2}{c}{EGMG-NO} & \multicolumn{2}{c}{AGMG} \\
\addlinespace
      \cmidrule(lr){3-4} \cmidrule(lr){5-6} \cmidrule(lr){7-8}
\cmidrule(lr){9-10}
\addlinespace
             &       & its                          & time                      
  & its & time & its & time & its & time  \\
\addlinespace
\midrule
\addlinespace

                & 1/800 & 88 & 60.95  & 45 & 29.0 & 43 & 28.1 & 27 & 6.1   \\
         2D & 1/1000 & 80 & 77.2  & 48 & 48.6 & 49 & 50.8 & 37 & 13.2  \\
                & 1/1200 & 81 & 114.0 & 43 & 64.4 & 42 & 64.7 & 37 & 18.3 \\
&  & &  &  &  \\
\addlinespace

                & 1/70 & 169 & 59.4  & 60 & 22.8 & 60 & 24.0 &  16 & 3.4
\\
         3D & 1/100 & 224 & 219.8 & 22 & 29.7  & 22 & 45.3 &  15 & 8.8 \\
                & 1/120 & 195 & 340.35   & 23 & 56.8 & 23 & 111.0 &  NC & NA
\\
\addlinespace
      \bottomrule
    \end{tabular}
  \end{center}
\end{table}
%
%
\begin{table}
  \caption{Numerical results for DCC1 for 2D and 3D problems
with $cf=5.5$ using GMRES(30)}
  \label{T:large_con_cf_5.5}
  \begin{center}
    \begin{tabular}{llllllllll}
     \toprule
\addlinespace
      matrix & $h$ & \multicolumn{2}{c}{GMG-ND} & \multicolumn{2}{c}{GMG-NO}
& \multicolumn{2}{c}{EGMG-NO} & \multicolumn{2}{c}{AGMG} \\
\addlinespace
      \cmidrule(lr){3-4} \cmidrule(lr){5-6} \cmidrule(lr){7-8}
\cmidrule(lr){9-10}
\addlinespace
             &       & its                          & time                      
  & its & time & its & time & its & time  \\
\addlinespace
\midrule
\addlinespace

                & 1/800 & 107 & 73.0  & 50 & 33.7 & 48 & 32.7 & 27 & 6.1   \\
         2D & 1/1000 & 100 & 107.8  & 50 & 55.3 & 49 & 52.7 & 37 & 13.2  \\
                & 1/1200 & 90 & 148.7 & 48 & 74.2 & 47 & 72.8 & 37 & 18.3 \\
&  & &  &  &  \\
\addlinespace

                & 1/70 & 282 & 100.6  & 65 & 24.5 & 70 & 26.7 &  16 & 3.4
\\
         3D & 1/100 & 240 & 260.6 & 44 & 52.0  & 45 & 63.2 &  15 & 8.8 \\
                & 1/120 & 223 & 405.1   & 24 & 56.9 & 24 & 84.5 &  NC & NA
\\
\addlinespace
      \bottomrule
    \end{tabular}
  \end{center}
\end{table}
%
%
\begin{table}
  \caption{Numerical results for DCC1 for 2D and 3D problems
with $cf=6$ using GMRES(30)}
  \label{T:large_con_cf_6}
  \begin{center}
    \begin{tabular}{llllllllll}
     \toprule
\addlinespace
      matrix & $h$ & \multicolumn{2}{c}{GMG-ND} & \multicolumn{2}{c}{GMG-NO}
& \multicolumn{2}{c}{EGMG-NO} & \multicolumn{2}{c}{AGMG} \\
\addlinespace
      \cmidrule(lr){3-4} \cmidrule(lr){5-6} \cmidrule(lr){7-8}
\cmidrule(lr){9-10}
\addlinespace
             &       & its                          & time                      
  & its & time & its & time & its & time  \\
\addlinespace
\midrule
\addlinespace

                & 1/800 & 132 & 79.7  & 56 & 36.7 & 56 & 37.0 & 27 & 6.3   \\
         2D & 1/1000 & 117 & 112.4  & 55 & 56.1 & 54 & 55.2 & 37 & 14.1  \\
                & 1/1200 & 111 & 152.5 & 55 & 80.9 & 53 & 77.9 & 37 & 18.2 \\
&  & &  &  &  \\
\addlinespace

                & 1/70 & 261 & 88.9  & 90 & 32.2  & 88 & 31.5 &  16 & 3.4
\\
         3D & 1/100 & 256 & 250.4 & 41 & 47.1  & 41 & 51.3 &  15 & 8.8
\\
                & 1/120 & 225 & 384.6  & 26 & 61.7 & 27 & 78.9 &  NC & NA
\\
\addlinespace
      \bottomrule
    \end{tabular}
  \end{center}
\end{table}
%
%
\begin{table}
  \caption{Numerical results for DCC1 problem with $cf=7$ using
GMRES(30).}
  \label{T:large_con_cf_7}
  \begin{center}
    \begin{tabular}{llllllllll}
     \toprule
\addlinespace
      matrix & $h$ & \multicolumn{2}{c}{GMG-ND} & \multicolumn{2}{c}{GMG-NO}
& \multicolumn{2}{c}{EGMG-NO} & \multicolumn{2}{c}{AGMG} \\
\addlinespace
      \cmidrule(lr){3-4} \cmidrule(lr){5-6} \cmidrule(lr){7-8}
\cmidrule(lr){9-10}
\addlinespace
             &       & its                          & time                      
  & its & time & its & time & its & time  \\
\addlinespace
\midrule
\addlinespace

                & 1/800 & 184 & 112.3  & 67 & 42.7 & 65 & 42.0 & 27 & 6.3   \\
         2D     & 1/1000 & 173 & 162.6  & 69 & 68.1 & 67 & 67.5 & 37 & 14.1  \\
                & 1/1200 & 397 & 530.4 & 67 & 97.6 & 64 & 95.2 & 37 & 18.2 \\
&  & &  &  &  \\
\addlinespace

                & 1/70 & 351 & 117.5  & 109 & 32.6  & 109 & 36.3 &  16 & 3.4 \\
         3D     & 1/100 & 301 & 296.2 & 80 & 82.8  & 78 & 81.8 &  15 & 8.8
\\
                & 1/120 & 257 & 440.0   & 29 & 69.7 & 29 & 74.5 &  NC & NA
\\
\addlinespace
      \bottomrule
    \end{tabular}
  \end{center}
\end{table}
%
%
\begin{table}
  \caption{Numerical results for DCC1 problem with $cf=8$ using
GMRES(30).}
  \label{T:large_con_cf_8}
  \begin{center}
    \begin{tabular}{llllllllll}
     \toprule
\addlinespace
      matrix & $h$ & \multicolumn{2}{c}{GMG-ND} & \multicolumn{2}{c}{GMG-NO}
& \multicolumn{2}{c}{EGMG-NO} & \multicolumn{2}{c}{AGMG} \\
\addlinespace
      \cmidrule(lr){3-4} \cmidrule(lr){5-6} \cmidrule(lr){7-8}
\cmidrule(lr){9-10}
\addlinespace
             &       & its                          & time                      
  & its & time & its & time & its & time  \\
\addlinespace
\midrule
\addlinespace

                & 1/800 & 266 & 171.3  & 82 & 51.5 & 83 & 54.2 & 27 & 6.3   \\
         2D     & 1/1000 & 229 & 225.6  & 84 & 81.0 & 83 & 79.8 & 37 & 14.1  \\
                & 1/1200 & 218 & 297.7 & 81 & 112.4 & 78 & 107.8 & 37 & 18.2 \\
&  & &  &  &  \\
\addlinespace

                & 1/70 & 502 & 161.0  & 123 & 41.0  & 115 & 37.9 &  16 & 3.4 \\
         3D     & 1/100 & 449 & 445.9 & 105 & 102.4  & 104 & 101.8 &  15 & 8.8
\\
                & 1/120 & 294 & 517.2   & 57 & 109.8 & 57 & 112.2 &  NC & NA
\\
\addlinespace
      \bottomrule
    \end{tabular}
  \end{center}
\end{table}
%
%
\begin{table}
  \caption{Comparison of exact and inexact coarse operators. Here NO stands for natural ordering and 
ND stands for nested dissection ordering}
  \label{T:norm}
  \begin{center}
    \begin{tabular}{llcc}
      \toprule
\addlinespace
    Problem  &  1/$h$   &    $\frac{\|LU-\tilde{L}\tilde{U}\|}{\|LU\|}$ for NO &     $\frac{\|LU-\tilde{L}\tilde{U}\|}{\|LU\|}$ for ND  \\
\addlinespace
      \midrule
\addlinespace
	    & 1/30	& $7.5e^{-5}$        & $7.8e^{-5}$                                                    \\
    JUMP3D  & 1/40 	& $8.3e^{-5}$        & $7.2e^{-5}$                                                     \\
            & 1/50	& $7.0e^{-5}$        & $1.1e^{-4}$                                                    \\
\addlinespace
     \bottomrule
\addlinespace
    \end{tabular}
  \end{center}
\end{table}
%
%
\begin{table}
  \caption{Numerical results for Florida matrix market collection.}
  \label{table1}
  \begin{center}
    \begin{tabular}{llllllll}
      \toprule
\addlinespace
      matrix          & $N_c$  & \multicolumn{2}{c}{GMG-ND} & \multicolumn{2}{c}{GMG-NO} & \multicolumn{2}{c}{AGMG}       \\
\addlinespace
    \cmidrule(lr){3-4} \cmidrule(lr){5-6} \cmidrule(lr){7-8} 
               &                                     & its                          & time & its & time & its & time \\
\addlinespace
\midrule
\addlinespace

      gyto\_m         & 4096                                    & 111                          & 5.2 & 137 & 7.0 & $>600$ & NA \\
      bodyy4          & 4096                                    & 54                          & 1.8 & 19 & 0.4 & $>600$ & NA \\
      nd6k            & 4096                                    & NC         
                & NA & 89 & 21.3 & MEM & NA \\
      bodyy5          & 4096                                    & 115                          & 4.6 & 30 & 0.7 & - & - \\
      wathen100       & 4096                                    & 12                          & 1.4 & 9 & 0.7 & 11 & 4.1 \\
      wathen120       & 4096                                    & 12                          & 1.3 & 9 & 0.9 & 11 & 35.0 \\
      torsion1        & 4096                                    & 13                          & 0.99 & 9 & 0.6 & 6 & 0.1 \\
      obstclae        & 4096                                    & 13                          & 0.9 & 9 & 0.6 & 6 & 0.1 \\
      jnlbrng1        & 4096                                    & 22                          & 1.5 & 11 & 0.7 & 11 & 0.1 \\
      minsurfo        & 4096                                    & 16                          & 1.0 & 11 & 0.6 & 8 & 0.1 \\
      gridgena        & 4096                                    & 310                          & 81.4 & 189 & 34.0 & $>600$ & NA \\
      crankseg\_1     & 4096                                    & 60                          & 22.3 & 76 & 26.2 & 334 & 69.1 \\
      qa8fk           & 4096                                    & 11                          & 4.9 & 12 & 3.6 & 15 & 1.3 \\
      cfd1            & 4096                                    & 145                          & 45.1 & 127 & 32.0 & MEM & NA \\
      finan512        & 4096                                    & 7                          & 1.8 & 7 & 1.2 & 4 & 106.0 \\
      shallow\_water1 & 4096                                    & 6                          & 1.3 & 6 & 0.8 & 4 & 0.1 \\
      2cubes\_sphere  & 4096                                    & 6                          & 3.9 & 6 & 2.5 & 7 & 8.6 \\
      Thermal\_TC     & 4096                                    & 6                          & 2.03 & 7 & 1.4 & MEM & NA \\
      Thermal\_TK     & 4096                                    & 22                          & 3.6 & 23 & 3.5 & 80 & 5.0 \\
      G2\_circuit     & 4096                                    & 32                          & 7.7 & 24 & 4.2 & 91 & 5.9 \\
      bcsstk18        & 4096                                    & 232                          & 11.6 & 111 & 3.7 & $>600$ & NA \\
      cbuckle         & 4096                                    & 41                          & 2.3 & 62 & 3.0 & 493 & 18.7 \\
      Pres\_Poisson   & 4096                                    & 21                          & 1.7 & 24 & 1.7 & 24 & 16.8 \\
\addlinespace
      \bottomrule
    \end{tabular}
  \end{center}
\end{table}
\begin{table}
  \caption{Numerical results for Florida matrix market collection with $cf=2.5$}
  \label{T:table_florida2}
  \begin{center}
    \begin{tabular}{lllllllllll}
      \toprule
\addlinespace
      matrix            & \multicolumn{2}{c}{GMG-ND} &
\multicolumn{2}{c}{EGMG-ND} & \multicolumn{2}{c}{GMG-NO} & 
\multicolumn{2}{c}{EGMG-NO} & \multicolumn{2}{c}{AGMG}       \\
\addlinespace
    \cmidrule(lr){2-3} \cmidrule(lr){4-5} \cmidrule(lr){6-7} \cmidrule(lr){8-9}
\cmidrule(lr){10-11}
                                               & its                      
   & time & its & time & its & time & its & time & its & time \\
\addlinespace
\midrule
\addlinespace

      gyto\_m                                             & 271           
              & 4.1 & 319& 4.6 & 465 & 6.9 & 562 & 7.3 & NC & NA \\
      bodyy4                                             & 56            
             & 0.9 & 56& 0.9 & 19 & 0.3 & 19& 0.4& NC & NA \\
      nd6k                                               & NC         
                & NA &NC &NA & NC & NA & NC & NA & ME & NA \\
      bodyy5                                              & 135           
              & 1.8 &137 &1.8 & 31 & 0.6& 31 & 0.6 & 1 & 245.4 \\
      wathen100                                           & 12            
             & 1.0 & 12 & 1.1 & 9 & 0.6 &9 &0.7 & 11 & 4.3 \\
      wathen120                                           & 12            
             & 1.2 & 12&  1.3& 9 & 0.8 &9 &0.8 & 11 & 7.0 \\
      torsion1                                            & 14            
             & 0.9 &14 &1.0 & 10 & 0.5 &10 &0.6 & 6 & 0.1 \\
      obstclae                                           & 14            
             & 0.9 & 14& 1.0 & 10 & 0.5 & 10& 0.6& 6 & 0.1 \\
      jnlbrng1                                          & 25           
             & 1.3 &24 &1.3 & 12 & 0.6 &  12& 0.6& 11 & 0.2 \\
      minsurfo                                            & 18
             & 1.0 &18 & 1.2& 11 & 0.5 &12 &0.6 & 7 & 0.2 \\
      gridgena                                            & NC           
              & NA & NC & NA & 307 & 10.3 & 308 & 9.2 & NC & NA \\
      crankseg\_1                                         & 74            
             & 21.4 & 109 & 31.6 & 99 & 25.5 &87 & 29.0& 468 & 96.0 \\
      qa8fk                                              & 12            
             & 4.9 & 11 &5.8 & 11 & 3.6 &10 & 4.4& 15 & 1.3 \\
      cfd1                                                & 439           
              & 37.0 & 442  &38.3 & 259 & 23.0 & 255& 30.5 &  ME & NA \\
      finan512                                            & 6             
            & 1.8 & 6 &1.9 & 7 & 1.2 &7 &1.3 & 4 & 106.0 \\
      shallow\_water1                                     & 6             
            & 1.4 &6 &2.3 & 6 & 0.9 &6 &1.7 &  4 & 0.1 \\
      2cubes\_sphere                                      & 6             
            & 4.4 & 7 &8.6 & 5 & 3.0 & 5& 15.2&  7 & 8.6 \\
      Thermal\_TC                                        & 6             
            & 2.2 & 6& 2.5& 6 & 1.5 &6 &1.9 &  ME & NA \\
      Thermal\_TK                                         & NC            
             & NA & NC & NA & NC & NA & NC & NA & NC & NA \\
      G2\_circuit                                         & 28            
             & 7.3 & 21 & 7.7& 21 & 4.7 &15 &5.0 & 74 & 4.8 \\
      bcsstk18                                           & NC           
              & NA &NC  &NA & 166 & 1.9 &169 &2.0 &  F1 & NA \\
      cbuckle                                             & 54            
             & 1.5 & 54 & 1.5 & 247 & 4.0 & 198&3.7  & F1 & NA \\
      Pres\_Poisson                                       & 24            
             & 1.0 & 22&1.0  & 26 & 1.0 & 26 &1.0  & 24 & 21.2 \\
\addlinespace
      \bottomrule
    \end{tabular}
  \end{center}
\end{table}
\begin{figure}
  \caption{Convergence curve for Pres\_Poisson for exact and inexact
    coarse grid solves. CPU time indicated inside small box.}
  \label{F:curvePres}
  \begin{center}
    \includegraphics[scale=0.33]{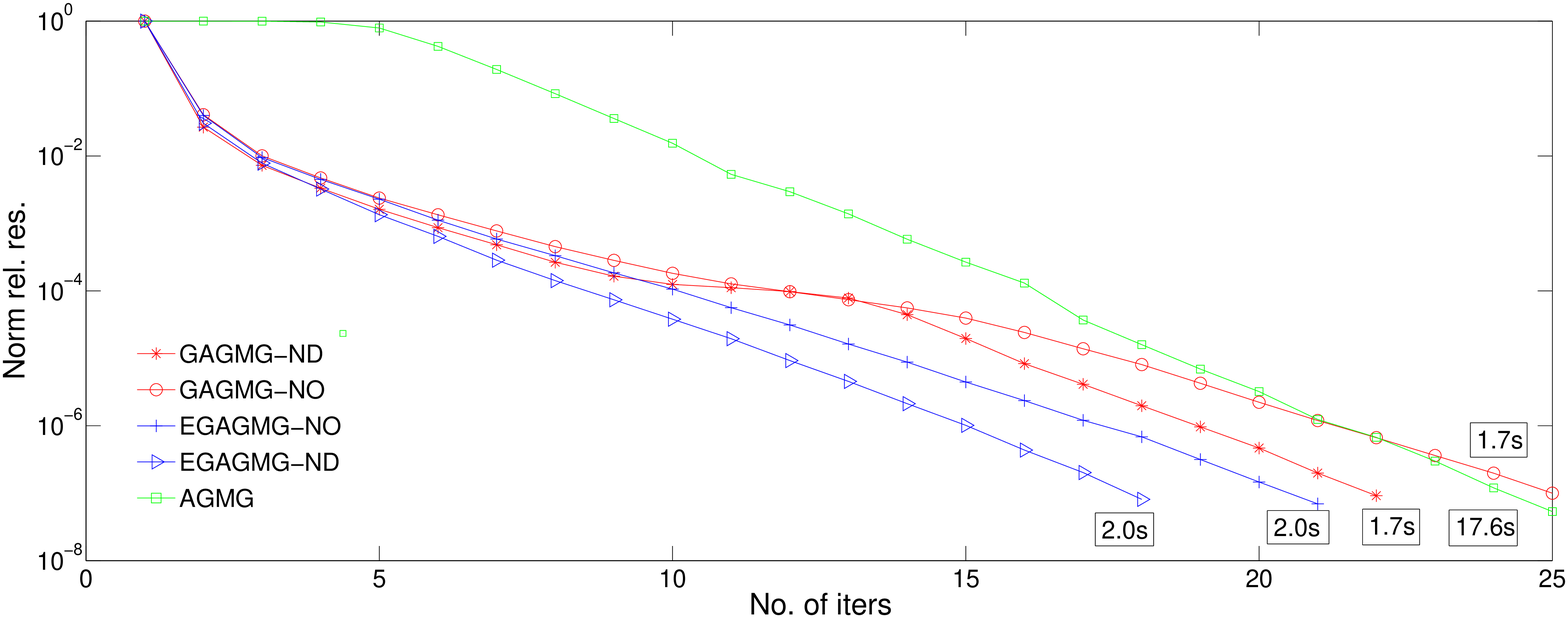}
  \end{center}
\end{figure}

\begin{figure}
  \caption{Convergence curve for DC3D $50 \times 50 \times 50$ for exact and inexact
    coarse grid solves. CPU time indicated inside small box.}
  \label{F:curveJUMP}
  \begin{center}
    \includegraphics[scale=0.33]{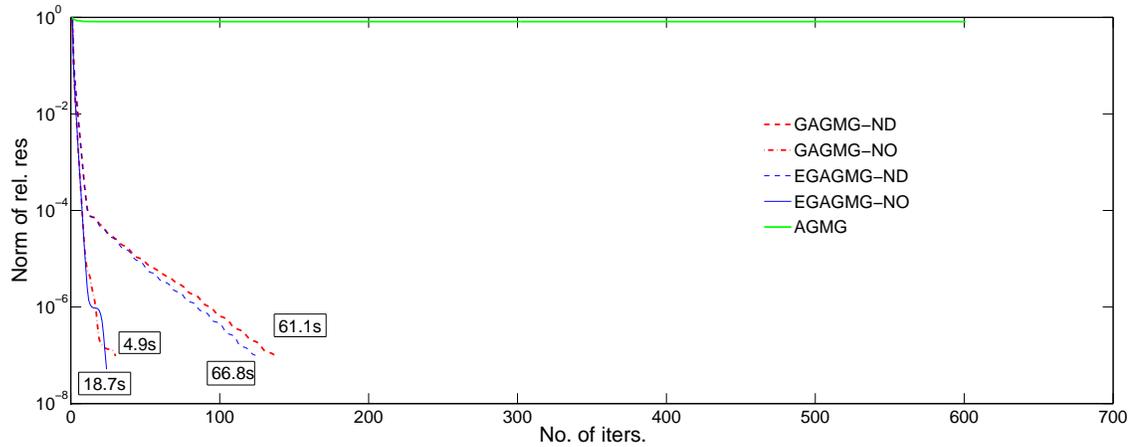}
  \end{center}
\end{figure}
\begin{figure}
  \caption{Comparison of no. of nonzeros for exact and inexact
    coarse grid solves for JUMP3D}
  \label{F:nonZeros}
  \begin{center}
    \includegraphics[scale=0.3]{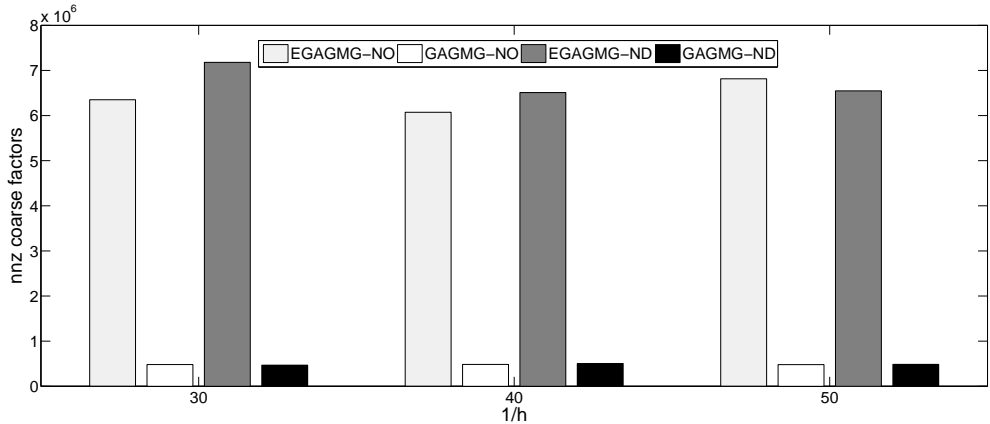}
  \end{center}
\end{figure}
\begin{figure}
  \caption{Jump in the diagonal entries of DC3D $30 \times 30 \times 30$
matrix when $\kappa(x)$ values are kept as $10^3$}
  \label{F:plotSky30Jump3}
  \begin{center}
    \includegraphics[scale=0.3]{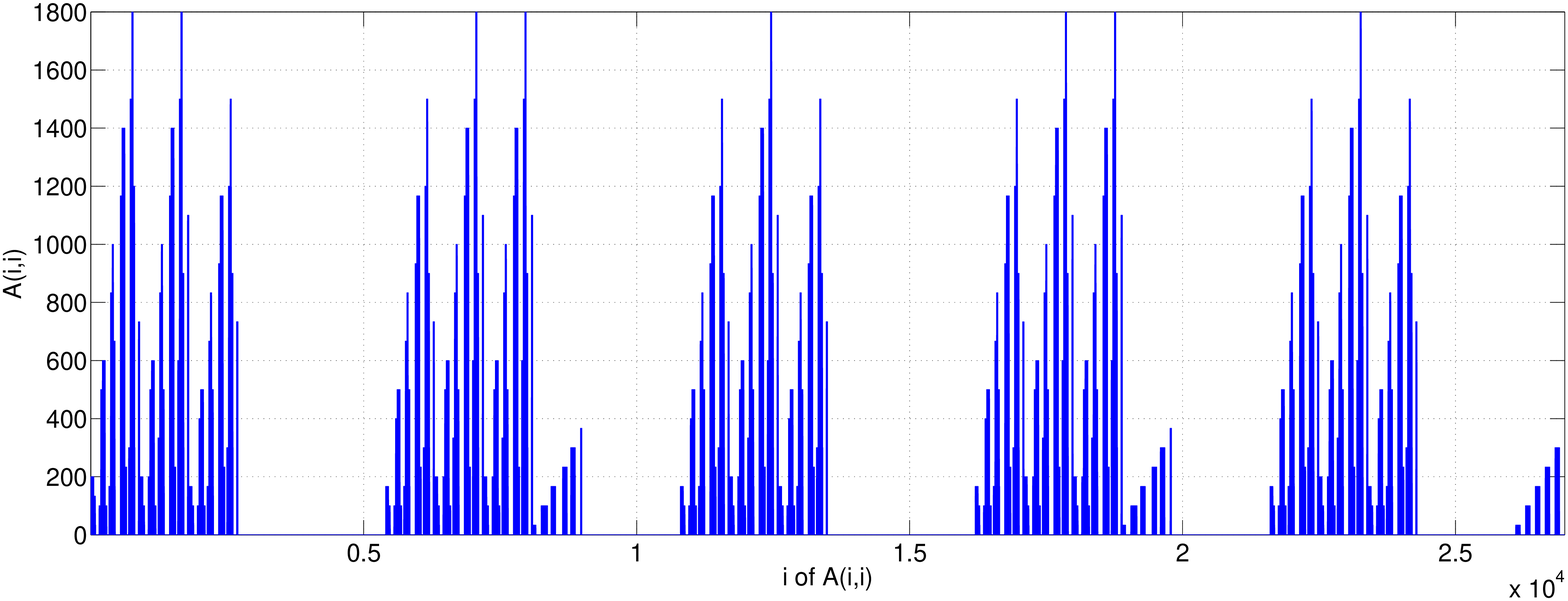}
  \end{center}
\end{figure}
%

\section{Conclusion}
We have proposed a two grid approach GAGMG with following ingredients
\vspace{-5mm}
\begin{packed_item}
\item Coarse grid based on graph matching
\item ILU(0) is the smoother with natural or nested dissection
  reordering
\item Coarse grid equation is solved inexactly
\end{packed_item}
\vspace{-5mm}
We compared out approach with the classical AGMG scheme. On
comparison, we found that the new strategy seems to be robust with a
very modest coarse grid size which is further solved cheaply by
performing an inexact solve. One of the aim of this work was to
provide a practical, easy to implement, yet robust two-grid methods.

We have tried only the sequential version of our method, in
future, we would like to implement the method in parallel with a
parallel inexact solve strategy. 
\section{Acknowledgement}
Many thanks to Universit\'e libre de Bruxelles for an ideal 
environment and the fond de la reserche scientifique (FNRS) 
Ref: 2011/V 6/5/004-IB/CS-15 that made this work possible.


\end{document}